\begin{document}

\title{Nonstationary iterated Tikhonov regularization in Banach spaces with
uniformly convex penalty terms
}

\titlerunning{Nonstationary iterated Tikhonov regularization}        

\author{Qinian Jin   \and Min Zhong     
}


\institute{Qinian Jin \at
Mathematical Sciences Institute,
Australian National University, Canberra, ACT 0200, Australia\\
\email{Qinian.Jin@anu.edu.au}\\
\\
Min Zhong \at
School of Mathematical Sciences, Fudan University, Shanghai 200433, China\\
\email{09110180007@fudan.edu.cn}\\
}


\newtheorem{Assumption}{Assumption}[section]
\newtheorem{Rule}{Rule}[section]
\newtheorem{Test}{Test}[section]

\def\l{\langle}
\def\r{\rangle}
\def\a{\alpha}
\def\b{\beta}
\def\d{\delta}
\def\la{\lambda}
\def\p{\partial}
\def\vep{{\mathcal E}}
\def\R{{\mathcal R}}

\def\N{\mathcal N}
\def\R{\mathcal R}
\def\D{\mathcal D}
\def\X{\mathcal X}
\def\Y{\mathcal Y}
\def\B{\mathcal B}
\def\A{\mathcal A}

\maketitle

\begin{abstract}
We consider the nonstationary iterated Tikhonov regularization in Banach spaces which defines
the iterates via minimization problems with uniformly convex penalty term. The penalty term is allowed to be non-smooth
to include $L^1$ and total variation (TV) like penalty functionals, which are significant in reconstructing
special features of solutions such as sparsity and discontinuities in practical applications.
We present the detailed convergence analysis and obtain the regularization property when the method is
terminated by the discrepancy principle. In particular we establish the strong convergence and the convergence in
Bregman distance which sharply contrast with the known results that only provide weak convergence
for a subsequence of the iterative solutions. Some numerical experiments on linear integral equations of first kind
and parameter identification in differential equations are reported.
\subclass{65J15 \and 65J20 \and 47H17}
\end{abstract}

\def\theequation{\thesection.\arabic{equation}}
\catcode`@=12

\section{\bf Introduction}
\setcounter{equation}{0}

We are interested in solving inverse problems which can be formulated as the operator equation
\begin{equation}\label{1.1}
F(x)=y,
\end{equation}
where $F: D(F)\subset \X\mapsto \Y$ is an operator between two Banach
spaces $\X$ and $\Y$ with domain $D(F)\subset \X$; the norms in $\X$ and $\Y$ are denoted by the
same notation $\|\cdot\|$ that should be clear from the context. A characteristic property of
inverse problems is their ill-posedness in the sense that their solutions do not depend continuously
on the data. Due to errors in the measurements, one never has the exact data in practical applications;
instead only noisy data are available. If one uses the algorithms developed for well-posed
problems directly, it usually fails to produce any useful information since noise could be amplified by
an arbitrarily large factor. Let $y^\d$ be the only available noisy data to $y$ satisfying
\begin{equation}\label{1.3}
\|y^\d-y\|\le \d
\end{equation}
with a given small noise level $\delta> 0$. How to use $y^\d$ to produce a stable approximate solution
to (\ref{1.1}) is a central topic, and regularization methods should be
taken into account.

When both $\X$ and $\Y$ are Hilbert spaces, a lot of regularization methods have been proposed
to solve inverse problems in the Hilbert space framework (\cite{EHN96,KNS2008}). In case $F:\X\to \Y$ is a bounded linear operator,
nonstationary iterated Tikhonov regularization is an attractive iterative method in which a sequence
$\{x_n^\d\}$ of regularized solutions is defined successively by
$$
x_n^\d :=\arg \min_{x\in \X} \left\{\frac{1}{2} \|F x-y^\d\|^2 + \frac{\a_n}{2} \|x-x_{n-1}^\d\|^2 \right\},
$$
where $x_0^\d:=x_0\in \X$ is an initial guess and $\{\a_n\}$ is a preassigned sequence of positive numbers.
Since $\{x_n^\d\}$ can be written explicitly as
$$
x_n^\d=x_{n-1}^\d -(\a_n I +F^* F)^{-1} F^* (F x_{n-1}^\d-y^\d),
$$
where $F^*: \Y\to \X$ denotes the adjoint of $F:\X\to \Y$, the complete analysis of the regularization property
has been established (see \cite{HG98} and references therein) when $\{\a_n\}$ satisfies suitable property
and the discrepancy principle is used to terminate the iteration,  This method has been extended in
\cite{Jin2010,Jin2011} to solve nonlinear inverse problems in Hilbert spaces.

Regularization methods in Hilbert spaces can produce good results when the sought solution is smooth.
However, because such methods have a tendency to over-smooth solutions, they may not produce good
results in applications where the sought solution has special features such as sparsity
or discontinuities. In order to capture the special features, the methods in Hilbert spaces should be
modified by incorporating the information of suitable adapted penalty functionals, for which the theories
in Hilbert space setting are no longer applicable.

The nonstationary iterated Tikhonov regularization has been extended in \cite{JS2012} for solving linear
inverse problems in Banach spaces setting by defining $x_n^\d$ as the minimizer of the convex minimization problem
$$
\min_{x\in \X} \left\{\frac{1}{r}\|F x-y^\d\|^r +\a_n \Delta_p(x, x_{n-1}^\d) \right\}
$$
for $n\ge 1$ successively, where $1\le r<\infty$, $1<p<\infty$ and $\Delta_p(\cdot, \cdot)$ denotes
the Bregman distance on $\X$ induced by the convex function $x\to \|x\|^p/p$. When $\X$ is uniformly smooth
and uniformly convex, and when the method is terminated by the discrepancy principle,
the regularization property has been established if $\{\a_n\}$ satisfies $\sum_{n=1}^\infty \a_n^{-1}=\infty$.
The numerical simulations in \cite{JS2012} indicate that
the method is efficient in sparsity reconstruction when choosing $\X=L^p$ with $p>1$ close to $1$ on one hand,
and provides robust estimator in the presence of outliers in the noisy data when choosing $\Y=L^1$ on the
other hand. However, since $\X$ is required to be uniformly smooth and uniformly convex and since
$\Delta_p(\cdot, \cdot)$ is induced by the power of the norm in $\X$, the result in \cite{JS2012}
does not apply to regularization methods with $L^1$ and total variation like penalty terms that are
important for reconstructing sparsity and discontinuities of sought solutions.

The total variational regularization was introduced in \cite{ROF92}, its importance was recognized immediately
and many successive works were conducted in the last two decades. In \cite{OBGXY2005} an iterative regularization
method based on Bregman distance and total variation was introduced to enhance the multi-scale nature of reconstruction.
The method solves (\ref{1.1}) with $F:\X\to \Y$ linear and $\Y$ a Hilbert space by defining
$\{x_n^\d\}$ in the primal space $\X$ and $\{\xi_n^\d\}$ in the dual space $\X^*$ via
\begin{align}\label{OBGXY}
\begin{split}
x_n^\d & :=\arg \min_{x\in \X} \left\{ \|F x-y^\d\|^2 +\a_n D_{\xi_{n-1}^\d} \Theta(x, x_{n-1}^\d)\right\},\\
\xi_n^\d & := \xi_{n-1}^\d- \frac{1}{\a_n} F^* (F x_n^\d-y^\d),
\end{split}
\end{align}
where $\Theta: \X \to (-\infty, \infty]$ is a proper convex function, $x_0^\d\in \X$ is an initial guess,
$\xi_0^\d\in \X^*$ is in the sub-gradient of $\Theta$ at $x_0^\d$, and $D_\xi\Theta(\cdot, \cdot)$ denotes
the Bregman distance induced by $\Theta$. This method was extended in \cite{BB2009} to solve nonlinear
inverse problems. Extensive numerical simulations were reported
in \cite{OBGXY2005,BB2009} and convergence analysis was given, with special attention to the case that $\X=L^2(\Omega)$
and $\Theta(x)=a \|x\|_{L^2}^2 +\int_\Omega |D x|$, where $\int_\Omega |D x|$ denotes the total variation,
when the iteration is terminated by a discrepancy principle and $\{\a_n\}$ satisfies the
condition $\underline{\a}\le \a_n \le \overline{\a}$ for two positive
constants $\overline{\a}\ge \underline{\a}>0$.  The analysis in \cite{OBGXY2005,BB2009}, however, is somewhat
preliminary since it provides only the boundedness of $\{\Theta(x_{n_\d}^\d)\}$ which guarantees only weak
convergence for a subsequence of $\{x_{n_\d}^\d\}$, where $n_\d$ denotes the stopping index determined by the
discrepancy principle. It is natural to ask if the whole sequence converges strongly and in Bregman distance.

We point out that the method (\ref{OBGXY}) is equivalent to the augmented Lagrangian method introduced originally
in \cite{H1969,P1969} and developed further in various directions, see \cite{IK2008} and reference therein.
One may refer to \cite{FG2012} for some results on convergence and convergence rates of the augmented Lagrangian
method applied to linear inverse problems in Hilbert spaces with general convex penalty term.
When $\X$ and $\Y$ are Hilbert spaces and $\Theta(x)=\|x\|^2$, (\ref{OBGXY}) is exactly the nonstationary
iterated Tikhonov regularization. In this paper we formulate an extension of the nonstationary iterated
Tikhonov regularization in the spirit of (\ref{OBGXY}) to solve (\ref{1.1}) with both $\X$ and $\Y$ being Banach
spaces and present the detailed convergence analysis when the method is terminated by the discrepancy principle.
In the method we allow $\{\a_n\}$ to vary in various ways so that geometric decreasing sequence can be included;
this makes it possible to terminate the method in fewer iterations. Moreover, we allow the penalty term
$\Theta$ to be general uniformly convex functions on $\X$ so that the method can be used for sparsity reconstruction
and discontinuity detection. Most importantly, we obtain
\begin{align*}
x_{n_\d}^\d\rightarrow x^\dag, \quad  \Theta(x_{n_\d}^\d) \rightarrow \Theta(x^\dag) \quad
\mbox{and} \quad D_{\xi_{n_{\delta}}^{\delta}} \Theta(x^\dag, x_{n_{\delta}}^{\delta}) \rightarrow 0
\end{align*}
and give a characterization of the limit $x^\dag$, which significantly improve the known convergence results.

This paper is organized as follows. In section 2 we give some preliminary results
on Banach spaces and convex analysis. In section 3, we then formulate the method in Banach spaces with uniformly
convex penalty term for solving linear and nonlinear inverse problems, and present the main convergence results.
In section 4 we first prove a convergence result for the method when the data is given exactly;
we then show that, if the data contains noise, the method
is well-defined and admits some stability property; by combining these results we finally obtain the proof
of the main convergence theorems.  Finally, in section 5 we present some numerical simulations on linear
integral equations of first kind and parameter identification problems in partial differential equations to
test the performance of the method.

\section{\bf Preliminaries}\label{Sect2}
\setcounter{equation}{0}

Let $\X$ be a Banach space with norm $\|\cdot\|$. We use $\X^*$ to denote its dual space. Given $x\in \X$
and $\xi \in \X^*$ we write $\l \xi, x\r=\xi (x)$ for the duality pairing. We use ``$\rightarrow$"
and ``$\rightharpoonup$" to denote the strong convergence and weak convergence respectively. If $\Y$ is
another Banach space and $A: \X\to \Y$ is a bounded linear operator, we use $A^*: \Y^*\to \X^*$ to denote its adjoint, i.e.
$\l A^* \zeta, x\r=\l \zeta, A x\r$ for any $x\in \X$ and $\zeta\in \Y^*$.  We use $\N(A)=\{x\in \X: A x=0\}$ to denote the
null space of $A$ and define
$$
\N(A)^\perp:= \{ \xi\in \X^*: \l \xi, x\r=0 \mbox{ for all } x\in \N(A)\}.
$$
When $\X$ is reflexive, there holds
\begin{equation}\label{eq:28.1june}
\N(A)^\perp =\overline{\R(A^*)},
\end{equation}
where $\R(A^*)$ denotes the range space of $A^*$ and $\overline{\R(A^*)}$ denotes the closure of $\R(A^*)$ in $\X^*$.

For a convex function $\Theta: \X \to (-\infty, \infty]$, we use $D(\Theta):=\{x\in \X: \Theta(x)<+\infty\}$
to denote its effective domain. We call $\Theta$ proper if $D(\Theta)\ne \emptyset$. Given $x\in \X$ we define
$$
\p \Theta(x):=\{\xi\in \X^*: \Theta(\bar x)-\Theta(x)-\l \xi, \bar x-x\r \ge 0 \mbox{ for all } \bar x\in \X\}.
$$
Any element $\xi\in \p\Theta(x)$ is called a subgradient of $\Theta$ at $x$. The multi-valued mapping $\p \Theta: \X\to 2^{\X^*}$ is called
the subdifferential of $\Theta$. It could happen that $\p \Theta(x)=\emptyset$ for some $x\in D(\Theta)$. Let
$$
D(\p\Theta):=\{x\in D(\Theta): \p \Theta(x)\ne \emptyset\}.
$$
For $x \in D(\p \Theta)$ and $\xi\in \p \Theta(x)$
we define
$$
D_\xi \Theta(\bar x,x):=\Theta(\bar x)-\Theta(x)-\l \xi, \bar x-x\r, \qquad \forall \bar x\in \X
$$
which is called the Bregman distance induced by $\Theta$ at $x$ in the direction $\xi$. Clearly
$D_\xi \Theta(\bar x,x)\ge 0$. By straightforward calculation one can see that
\begin{equation}\label{4.3.1}
D_\xi \Theta(x_2,x)-D_\xi \Theta(x_1, x) =D_{\xi_1} \Theta(x_2,x_1) +\l \xi_1-\xi, x_2-x_1\r
\end{equation}
for all $x, x_1\in D(\p \Theta)$, $\xi\in \p \Theta(x)$, $\xi_1\in \p \Theta(x_1)$ and $x_2\in \X$.

A proper convex function $\Theta: \X \to (-\infty, \infty]$ is called uniformly convex if there is a continuous function
$h:[0, \infty) \to [0, \infty)$, with the property that $h(t)=0$ implies $t=0$, such that
\begin{equation}\label{eq:7.29.1}
\Theta(\la \bar x +(1-\la) x) +\la (1-\la) h(\|\bar x-x\|) \le \la \Theta(\bar x) +(1-\la) \Theta(x)
\end{equation}
for all $\bar x, x\in \X$ and $\la\in (0,1)$. If $h$ in (\ref{eq:7.29.1}) can be taken as $h(t)=c t^p$
for some $c>0$ and $p\ge 2$, then $\Theta$ is called $p$-uniformly convex.
It can be shown (\cite[Theorem 3.5.10]{Z2002}) that $\Theta$ is uniformly convex if and only if there is a strictly increasing continuous function
$\varphi: [0, \infty)\to [0, \infty)$ with $\varphi(0)=0$ such that
\begin{equation}\label{pconv}
D_\xi \Theta(\bar x,x) \ge \varphi(\|\bar x-x\|)
\end{equation}
for all $\bar x \in \X$, $x\in D(\p \Theta)$ and $\xi\in \p \Theta(x)$.


On a Banach space $\X$, we consider for $1<r<\infty$ the convex function $x\to \|x\|^r/r$.
Its subdifferential at $x$ is given by
$$
J_r(x):=\left\{\xi\in \X^*: \|\xi\|=\|x\|^{r-1} \mbox{ and } \l \xi, x\r=\|x\|^r\right\}
$$
which gives the duality mapping $J_r: \X \to 2^{\X^*}$ with gauge function $t\to t^{r-1}$.
We call $\X$ uniformly convex if its modulus of convexity
$$
\d_{\X}(t) := \inf \{2 -\|\bar x +x\| : \|\bar x\|=\|x\|=1,  \|\bar x-x\| \ge t\}
$$
satisfies $\d_{\X}(t)>0$ for all $0 < t\le 2$. If there are $c>0$ and $r>1$ such that $\d_{\X}(t) \ge c t^r$
for all $0<t\le 2$, then $\X$ is called $r$-uniformly convex. We call $\X$ uniformly smooth if its modulus of
smoothness
$$
\rho_{\X}(s) := \sup\{\|\bar x+ x\|+\|\bar x-x\|- 2 : \|\bar x\| = 1,  \|x\|\le s\}
$$
satisfies $\lim_{s\searrow 0} \frac{\rho_{\X}(s)}{s} =0$.
One can refer to \cite{Adams,C1990} for many examples of Banach spaces, including the sequence spaces $l^r$,
the Lebesgue spaces $L^r$, the Sobolev spaces $W^{k,r}$ and the Besov spaces $B^{s,r}$
with $1<r<\infty$, that are both uniformly convex and uniformly smooth.

It is well known that any uniformly convex or uniformly smooth Banach space is reflexive.
On a uniformly smooth Banach space $\X$, every duality mapping $J_r$ with $1<r<\infty$ is single valued and
uniformly continuous on bounded sets; for each $1<r<\infty$ we use
\begin{align*}
\Delta_r(\bar{x},x) = \frac{1}{r}\|\bar{x}\|^r - \frac{1}{r}\|x\|^r - \l J_r(x),\bar{x}-x\r,\quad\forall \bar{x}, x\in\X
\end{align*}
to denote the Bregman distance induced by the convex function $\Theta(x) = \|x\|^r/r$.

Furthermore, on a uniformly convex Banach space, any sequence
$\{x_n\}$ satisfying $x_n \rightharpoonup x$ and $\|x_n\|\rightarrow \|x\|$ must satisfy $x_n\rightarrow x$
as $n\rightarrow \infty$. This property can be easily generalized for uniformly convex functions which we state
in the following result.

\begin{lemma}\label{lem:Kadec}
Let $\Theta:\X \to (-\infty, \infty]$ be a proper, weakly lower semi-continuous, and uniformly convex function.
Then $\Theta$ admits the Kadec property, i.e. for any sequence $\{x_n\}\subset \X$ satisfying
$x_n \rightharpoonup x\in \X$ and $\Theta(x_n) \rightarrow \Theta(x)<\infty$ there holds
$x_n\rightarrow x$ as $n\rightarrow \infty$.
\end{lemma}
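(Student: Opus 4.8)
The plan is to use the defining inequality (\ref{eq:7.29.1}) of uniform convexity evaluated at the midpoint of $x_n$ and the weak limit $x$, together with the weak lower semicontinuity of $\Theta$, to force $h(\|x_n-x\|)\to 0$; a short compactness argument on $[0,\infty)$ then upgrades this to $\|x_n-x\|\to 0$.

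First, since $x_n\rightharpoonup x$ the sequence $\{x_n\}$ is bounded, so there is $M>0$ with $\|x_n-x\|\le M$ for all $n$. Applying (\ref{eq:7.29.1}) with $\la=1/2$, with $\bar x=x_n$ and second argument the weak limit $x$, gives
\[
\frac14 h(\|x_n-x\|)\le \frac12\Theta(x_n)+\frac12\Theta(x)-\Theta\!\left(\tfrac{x_n+x}{2}\right).
\]
Because $\tfrac12(x_n+x)\rightharpoonup x$ and $\Theta$ is weakly lower semicontinuous, $\liminf_n\Theta(\tfrac{x_n+x}{2})\ge\Theta(x)$. Using the hypothesis $\Theta(x_n)\to\Theta(x)<\infty$ and taking $\limsup$ in the displayed inequality, I obtain $\limsup_n\tfrac14 h(\|x_n-x\|)\le\Theta(x)-\Theta(x)=0$; since $h\ge0$ this yields $h(\|x_n-x\|)\to 0$.

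The last, and only delicate, step is to deduce $\|x_n-x\|\to 0$ from $h(\|x_n-x\|)\to 0$. Here $h$ is merely assumed continuous with the property that $h(t)=0$ implies $t=0$; it need not be monotone nor bounded below away from the origin, so $h$ cannot be inverted directly. Instead I argue by contradiction: if $\|x_n-x\|\not\to0$, there exist $\varepsilon>0$ and a subsequence with $\|x_{n_k}-x\|\ge\varepsilon$. As these values lie in the compact interval $[\varepsilon,M]$, a further subsequence converges to some $t^*\in[\varepsilon,M]$, and continuity of $h$ forces $h(t^*)=\lim_k h(\|x_{n_k}-x\|)=0$, whence $t^*=0$, contradicting $t^*\ge\varepsilon$. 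The boundedness supplied by weak convergence is exactly what makes this compactness argument run, and is the one place where more than the pointwise zero-behaviour of $h$ is needed.
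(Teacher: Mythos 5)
Your proof is correct and follows essentially the same route as the paper: apply the uniform convexity inequality (\ref{eq:7.29.1}) at the midpoint $(x_n+x)/2$ with $\la=1/2$, use weak lower semicontinuity of $\Theta$ and $\Theta(x_n)\to\Theta(x)$, and conclude by contradiction. In fact your final compactness step (restricting $h$ to $[\varepsilon,M]$, which requires the boundedness of $\{x_n\}$ coming from weak convergence) makes explicit a point the paper glosses over when it simply asserts the existence of a uniform gap $\gamma>0$, since $h$ is only assumed continuous and positive off zero, not monotone.
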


\begin{proof}
Assume the result is not true. Then, by taking a subsequence if necessary, there is an $\epsilon>0$ such that
$\|x_n-x\| \ge \epsilon$ for all $n$. In view of the uniformly convexity of $\Theta$, there is a $\gamma>0$
such that
$
\Theta\left((x_n+x)/2\right) \le \left(\Theta(x_n)+\Theta(x)\right)/2 -\gamma.
$
Using $\Theta(x_n)\rightarrow \Theta(x)$ we then obtain
$$
\limsup_{n\rightarrow \infty} \Theta\left(\frac{x_n+x}{2}\right) \le \Theta(x) -\gamma.
$$
On the other hand, observing that $(x_n+x)/2 \rightharpoonup x$, we have from the weakly lower semi-continuity
of $\Theta$ that
$$
\Theta(x) \le \liminf_{n\rightarrow \infty} \Theta\left(\frac{x_n+x}{2}\right).
$$
Therefore $\Theta(x) \le \Theta(x)-\gamma$, which is a contradiction. \hfill $\Box$
\end{proof}

In many practical applications, proper, weakly lower semi-continuous, uniformly convex functions
can be easily constructed. For instance, consider $\X=L^p(\Omega)$, where $2\le p<\infty$ and
$\Omega$ is a bounded domain in ${\mathbb R}^d$. It is known that the functional
$
\Theta_0(x) := \int_\Omega |x(\omega)|^p d\omega
$
is uniformly convex on $L^p(\Omega)$ (it is in fact $p$-uniformly convex). Consequently we obtain on
$L^p(\Omega)$ the uniformly convex functions
\begin{equation}\label{eq:7.27}
\Theta(x):=\mu \int_\Omega |x(\omega)|^p d\omega + a \int_\Omega |x(\omega)| d\omega
+b \int_\Omega |D x|,
\end{equation}
where $\mu>0$, $a, b\ge 0$, and $\int_\Omega|D x|$ denotes the total variation of $x$ over $\Omega$ that is defined by (\cite{G84})
$$
\int_\Omega |D x| :=\sup\left\{ \int_\Omega x \, \mbox{div} f d\omega:
f\in C_0^1(\Omega; {\mathbb R}^N) \mbox{ and }
\|f\|_{L^\infty(\Omega)}\le 1\right\}.
$$
For $a=1$ and $b=0$ the corresponding function is useful for sparsity reconstruction (\cite{T96});
while for $a=0$ and $b=1$ the corresponding function is useful for detecting the discontinuities,
in particular, when the solutions are piecewise-constant (\cite{ROF92}).

\section{The method and main results}\label{Se2}

We now return to (\ref{1.1}), where $F:\X \to \Y$ is an operator between two Banach spaces $\X$ and $\Y$.
We will always assume that $\X$ is reflexive, $\Y$ is uniformly smooth, and (\ref{1.1}) has a solution.
In general, the equation (\ref{1.1}) may have many solutions.
In order to find the desired one, some selection criteria should be enforced. Choosing a proper convex
function $\Theta$, we pick $x_0\in D(\partial \Theta)$ and $\xi_0\in \partial \Theta(x_0)$ as the
initial guess, which may incorporate some available information on the sought solution. We define $x^{\dag}$
to be the solution of (\ref{1.1}) with the property
\begin{align}\label{eq definition of xdag}
D_{\xi_0} \Theta(x^{\dag},x_0) := \min_{x\in D(\Theta)\cap D(F) }\left\{D_{\xi_0} \Theta(x,x_0) : F(x) = y\right\}.
\end{align}

We will work under the following conditions on the convex function $\Theta$ and the operator $F$.

\begin{Assumption}\label{A0}
$\Theta$ is a proper, weakly lower semi-continuous and uniformly convex function such that (\ref{pconv}) holds, i.e.
there is a strictly increasing continuous function $\varphi: [0, \infty)\to [0, \infty)$ with $\varphi(0)=0$ such that
$$
D_\xi \Theta(\bar x, x) \ge \varphi(\|\bar x-x\|)
$$
for $\bar x \in \X$, $x\in D(\p \Theta)$ and $\xi\in \p \Theta(x)$.

\end{Assumption}

\begin{Assumption}\label{A1}
\begin{enumerate}
\item[]
\begin{itemize}
\item[(a)] $D(F)$ is convex, and $F$ is weakly closed, i.e. for any sequence $\{x_n\} \subset D(F)$
satisfying $x_n\rightharpoonup x\in \X$ and  $F(x_n)\rightharpoonup v \in \Y$ there hold $x\in D(F)$ and $F(x) = v$;

\item[(b)] There is $\rho>0$ such that (\ref{1.1}) has a solution in $B_\rho(x_0)\cap D(F)\cap D(\Theta)$, where $ B_{\rho}(x_0)
  : = \left\{x\in\X:\ \|x-x_0\|\leq \rho\right\}$;

\item[(c)]  $F$ is Fr\'{e}chet differentiable on $D(F)$,
and $x\to F'(x)$ is continuous on $D(F)$, where $F'(x)$ denotes the Fr\'{e}chet derivative of $F$ at $x$;

\item[(d)] There exists $0\le \eta<1$ such that
  \begin{align*}
  \|F(\bar{x})-F(x)-F'(x)(\bar{x}-x)\|\leq\eta\|F(\bar{x})-F(x)\|
  \end{align*}
  for all $\bar{x}, x\in B_{3\rho}(x_0)\cap D(F)$.

\end{itemize}
\end{enumerate}
\end{Assumption}

When $\X$ is a reflexive Banach space, by using the weakly closedness of $F$ and the weakly
lower semi-continuity and uniformly convexity of $\Theta$ it is standard to show that $x^\dag$ exists. The following result shows
that $x^\dag$ is in fact uniquely defined.

\begin{lemma}\label{lem existence and uniqueness of xdag}
Let $\X$ be reflexive,  $\Theta$ satisfy Assumption \ref{A0}, and $F$ satisfy Assumption \ref{A1}.
If $x^\dag$ is a solution of $F(x)=y$ satisfying (\ref{eq definition of xdag}) with
\begin{align}\label{eq:20.10june}
D_{\xi_0} \Theta(x^\dag, x_0) \le \varphi(\rho),
\end{align}
then $x^\dag$ is uniquely defined.
\end{lemma}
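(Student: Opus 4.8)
The plan is to assume, toward a contradiction, that there are two solutions $x_1^\dag$ and $x_2^\dag$ of $F(x)=y$, each satisfying (\ref{eq definition of xdag}) together with the bound (\ref{eq:20.10june}), and to show that they must coincide. First I would check that both solutions lie in the ball $B_\rho(x_0)$: since $\xi_0\in\p\Theta(x_0)$, the uniform convexity estimate (\ref{pconv}) gives $\varphi(\|x_i^\dag-x_0\|)\le D_{\xi_0}\Theta(x_i^\dag,x_0)\le\varphi(\rho)$ for $i=1,2$, and the strict monotonicity of $\varphi$ then forces $\|x_i^\dag-x_0\|\le\rho$. As minimizers of the same constrained problem, the two solutions share a common optimal value $m:=D_{\xi_0}\Theta(x_1^\dag,x_0)=D_{\xi_0}\Theta(x_2^\dag,x_0)$.

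The crucial step is to handle the nonlinearity of $F$ by means of the tangential cone condition in Assumption \ref{A1}(d). Since $F(x_1^\dag)=F(x_2^\dag)=y$ and both points lie in $B_{3\rho}(x_0)\cap D(F)$, applying (d) with $\bar x=x_1^\dag$ and $x=x_2^\dag$ gives $\|F'(x_2^\dag)(x_1^\dag-x_2^\dag)\|\le\eta\|F(x_1^\dag)-F(x_2^\dag)\|=0$, so that $F'(x_2^\dag)(x_1^\dag-x_2^\dag)=0$. I would then introduce the midpoint $\bar x:=(x_1^\dag+x_2^\dag)/2$, which belongs to $D(F)\cap D(\Theta)\cap B_\rho(x_0)$ by convexity of each of these sets ($D(F)$ by (a), $D(\Theta)$ as the effective domain of a convex function, and $B_\rho(x_0)$ trivially). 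Because $\bar x-x_2^\dag=\tfrac12(x_1^\dag-x_2^\dag)$, we also have $F'(x_2^\dag)(\bar x-x_2^\dag)=0$, and applying (d) once more with $\bar x$ and $x_2^\dag$ yields $\|F(\bar x)-y\|=\|F(\bar x)-F(x_2^\dag)\|\le\eta\|F(\bar x)-F(x_2^\dag)\|$; since $\eta<1$ this forces $F(\bar x)=y$. Hence the midpoint is again admissible for the minimization problem (\ref{eq definition of xdag}), so $D_{\xi_0}\Theta(\bar x,x_0)\ge m$.

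It then remains to bring in uniform convexity. Writing $D_{\xi_0}\Theta(\cdot,x_0)=\Theta(\cdot)-\Theta(x_0)-\l\xi_0,\cdot-x_0\r$ and applying (\ref{eq:7.29.1}) with $\la=1/2$ at the midpoint, the affine term splits additively and I obtain
\begin{align*}
D_{\xi_0}\Theta(\bar x,x_0)\le \tfrac12 D_{\xi_0}\Theta(x_1^\dag,x_0)+\tfrac12 D_{\xi_0}\Theta(x_2^\dag,x_0)-\tfrac14 h(\|x_1^\dag-x_2^\dag\|)=m-\tfrac14 h(\|x_1^\dag-x_2^\dag\|).
\end{align*}
Combining this with the admissibility bound $D_{\xi_0}\Theta(\bar x,x_0)\ge m$ gives $h(\|x_1^\dag-x_2^\dag\|)\le 0$; since $h\ge 0$ and $h(t)=0$ only for $t=0$, I conclude $x_1^\dag=x_2^\dag$, which proves uniqueness.

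The main obstacle is the passage in the second paragraph showing that the midpoint is again a solution. Because $F$ is nonlinear, the solution set $\{x:F(x)=y\}$ need not be convex, so one cannot simply invoke strict convexity of $\Theta$ over a convex feasible region, as one would in the linear case. The tangential cone condition (d), made available by the a priori bound (\ref{eq:20.10june}) that keeps $x_1^\dag$, $x_2^\dag$ and their midpoint inside $B_{3\rho}(x_0)$, is exactly what restores midpoint-admissibility and allows the uniform convexity argument to close; verifying that all the points stay within $B_{3\rho}(x_0)$ is the one place where the hypothesis (\ref{eq:20.10june}) is genuinely used.
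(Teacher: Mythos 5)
Your proposal is correct and follows essentially the same route as the paper: both proofs show the two minimizers lie in $B_\rho(x_0)$ via the uniform convexity bound, use Assumption \ref{A1}(d) to conclude that $F'$ annihilates their difference and hence that convex combinations of the solutions again solve $F(x)=y$, and then play minimality against convexity of $D_{\xi_0}\Theta(\cdot,x_0)$. The only (cosmetic) difference is that the paper takes a general convex combination $x_\la$ and invokes strict convexity to reach a contradiction, whereas you take the midpoint and use the quantitative modulus $h$ from (\ref{eq:7.29.1}) to conclude $h(\|x_1^\dag-x_2^\dag\|)\le 0$ directly.
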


\begin{proof}
Assume that (\ref{1.1}) has two distinct solutions $\hat{x}$ and $x^\dag$ satisfying
(\ref{eq definition of xdag}). Then it follows from (\ref{eq:20.10june}) that
$$
D_{\xi_0} \Theta(\hat{x}, x_0) =D_{\xi_0}\Theta(x^\dag, x_0) \le \varphi (\rho).
$$
By using Assumption \ref{A0} on $\Theta$ we obtain $\|\hat{x}-x_0\| \le \rho$ and $\|x^\dag-x_0\|\le \rho$.
Since $F(\hat{x})=F(x^\dag)$, we can use Assumption \ref{A1} (d) to derive that $F'(x^\dag) (\hat{x}-x^\dag)=0$.
Let $x_\la=\la \hat{x}+(1-\la) x^\dag$ for $0<\la<1$. Then $x_\la\in B_{\rho}(x_0) \cap D(\Theta) \cap D(F)$
and $F'(x^\dag) (x_\la-x^\dag)=0$. Thus we can use Assumption \ref{A1} (d) to conclude that
$$
\|F(x_\la)-F(x^\dag)\| \le \eta \|F(x_\la)-F(x^\dag)\|.
$$
Since $0\le \eta <1$, this implies that $F(x_\la)=F(x^\dag)=y$. Consequently, by the minimal property of $x^\dag$ we have
\begin{align}\label{eq:20.7june}
D_{\xi_0} \Theta(x_\la, x_0) \ge  D_{\xi_0} \Theta(x^\dag, x_0).
\end{align}
On the other hand, it follows from the strictly convexity of $\Theta$ that
\begin{align*}
D_{\xi_0} \Theta (x_\la, x_0)
& < \la D_{\xi_0} \Theta (\hat{x}, x_0) +(1-\la) D_{\xi_0} \Theta (x^\dag, x_0)= D_{\xi_0} \Theta(x^{\dag}, x_0)
\end{align*}
for $0<\la<1$ which is a contradiction to (\ref{eq:20.7june}). \hfill $\Box$
\end{proof}

We are now ready to formulate the nonstationary iterated Tikhonov regularization with penalty term
induced by the uniformly convex function $\Theta$. For the initial guess
$x_0^{\delta}: = x_0\in D(\partial \Theta)\cap D(F)$ and $\xi_0^{\delta} := \xi_0\in \partial \Theta(x_0)$,
we take a sequence of positive numbers $\{\alpha_n\}$ and define the iterative sequences
$\{x_n^{\delta}\}$ and $\{\xi_n^{\delta}\}$ successively by
\begin{align}\label{eq method}
\begin{split}
&x_n^\d\in \arg \min_{x\in D(F)} \left\{\frac{1}{r} \|F(x)-y^\d\|^r
+ \a_n D_{\xi_{n-1}^\d} \Theta(x, x_{n-1}^\d)\right\},\\
&\xi_n^\d = \xi_{n-1}^\d-\frac{1}{\a_n} F'(x_n^\d)^* J_r(F(x_n^\d)-y^\d)
\end{split}
\end{align}
for $n\geq 1$, where $1<r <\infty$ and $J_r: \Y\to \Y^*$ denotes the duality mapping of $\Y$ with gauge function
$t\to t^{r-1}$ which is single-valued and continuous because $\Y$ is assumed to be uniformly smooth.
At each step, the existence of $x_n^{\delta}$ is
guaranteed by the reflexivity of $\X$ and $\Y$, the weakly lower semi-continuity and uniformly convexity of $\Theta$,
and the weakly closedness of $F$. However, $x_n^\d$ might not be unique when $F$ is nonlinear;
we will take $x_n^\d$ to be any one of the minimizers. In view of the minimality of $x_n^\d$, we have
$\xi_n^\d\in \p \Theta(x_n^\d)$. From the definition of $x_n^{\delta}$, it is straightforward to see that
\begin{align}\label{mono}
\|F(x_n^\d)-y^\d\|\leq \|F(x_{n-1}^\d)-y^\d\|,\qquad n=1,2,\cdots.
\end{align}
We will terminate the iteration by the discrepancy principle
\begin{align}\label{DP}
\|F(x_{n_\d}^\d)-y^\d\|\leq \tau\d<\|F(x_n^\d)-y^\d\|,\qquad 0\leq n<n_\d
\end{align}
with a given constant $\tau>1$. The output $x_{n_{\delta}}^{\delta}$ will be used to
approximate a solution of (\ref{1.1}).

In order to understand the convergence property of $x_{n_\d}^\d$, it is necessary to consider
the noise-free iterative sequences $\{x_n\}$ and $\{\xi_n\}$, where each $x_n$ and $\xi_n$ with $n\geq 1$
are defined by (\ref{eq method}) with $y^\d$ replaced by $y$, i.e.,
\begin{align}\label{eq:method}
\begin{split}
x_n&\in\arg\min_{x\in D(F)} \left\{\frac{1}{r} \|F(x)-y\|^r +\a_n D_{\xi_{n-1}}\Theta(x,x_{n-1})\right\}, \\
\xi_n&=\xi_{n-1}-\frac{1}{\a_n} F'(x_n)^* J_r(F(x_n)-y) \in \p \Theta(x_n).
\end{split}
\end{align}
In section \ref{noise-free} we will give a detailed convergence analysis on $\{x_n\}$; in particular, we
will show that $\{x_n\}$ strongly converges to a solution of (\ref{1.1}). In order to connect such result with the
convergence property of $x_{n_\d}^\d$, we will make the following assumption.

\begin{Assumption}\label{A2}
$x_n$ is uniquely defined for each $n$.
\end{Assumption}

We will give some sufficient condition for the validity of Assumption \ref{A2}. This assumption
enables us to establish some stability results connecting $x_n^\d$ and $x_n$
so that we can finally obtain the convergence property of $x_{n_\d}^\d$ in the following result.

\begin{theorem}\label{th2}
Let $\X$ be reflexive and $\Y$ be uniformly smooth, let $\Theta$ satisfy Assumption \ref{A0}, and let $F$ satisfy
Assumptions \ref{A1} and \ref{A2}. Assume that $1<r<\infty$, $\tau>(1+\eta)/(1-\eta)$ and
that $\{\alpha_n\}$ is a sequence of positive numbers satisfying
$\sum_{n=1}^{\infty}\alpha_n^{-1} = \infty$ and $\alpha_n\leq c_0\alpha_{n+1}$ for all $n$ with some
constant $c_0>0$. Assume further that
\begin{equation}\label{eq:20.8june}
D_{\xi_0} \Theta(x^\dag, x_0) \le \frac{\tau^r-1}{\tau^r-1+c_0} \varphi (\rho).
\end{equation}
Then, the discrepancy principle~\eqref{DP} terminates the method (\ref{eq method}) after $n_{\delta}<\infty$ steps.
Moreover, there is a solution $x_*\in D(\Theta)$ of (\ref{1.1}) such that
\begin{align} \label{eq:convergence}
x_{n_\d}^\d\rightarrow x_*, \quad  \Theta(x_{n_\d}^\d) \rightarrow \Theta(x_*) \quad
\mbox{and} \quad D_{\xi_{n_{\delta}}^{\delta}} \Theta(x_*, x_{n_{\delta}}^{\delta}) \rightarrow 0
\end{align}
as $\d\rightarrow 0$. If, in addition, $\N(F'(x^\dag))\subset \N (F'(x))$ for all $x\in B_{3\rho}(x_0)\cap D(F)$, then
$x_*=x^\dag$.
\end{theorem}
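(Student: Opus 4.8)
The plan is to govern the whole argument by a single monotonicity--dissipation estimate for the non-negative quantity $e_n^\d := D_{\xi_n^\d}\Theta(x^\dag, x_n^\d)$. Applying the three-point identity (\ref{4.3.1}) with $x_2=x^\dag$, $x_1=x_n^\d$, $x=x_{n-1}^\d$, $\xi=\xi_{n-1}^\d$, $\xi_1=\xi_n^\d$, and inserting the update $\xi_n^\d-\xi_{n-1}^\d=-\a_n^{-1}F'(x_n^\d)^*J_r(F(x_n^\d)-y^\d)$, I would first obtain
\begin{align*}
e_{n-1}^\d-e_n^\d = D_{\xi_{n-1}^\d}\Theta(x_n^\d,x_{n-1}^\d)-\frac{1}{\a_n}\l J_r(F(x_n^\d)-y^\d),\,F'(x_n^\d)(x^\dag-x_n^\d)\r.
\end{align*}
Writing $F'(x_n^\d)(x^\dag-x_n^\d)=(y-F(x_n^\d))-\left[F(x^\dag)-F(x_n^\d)-F'(x_n^\d)(x^\dag-x_n^\d)\right]$ and using the duality identities $\l J_r(z),z\r=\|z\|^r$, $\|J_r(z)\|=\|z\|^{r-1}$, the tangential cone bound of Assumption \ref{A1}(d), and $\|y-y^\d\|\le\d$, the inner product is bounded above by $-(1-\eta)\|z\|^r+(1+\eta)\|z\|^{r-1}\d$ with $z=F(x_n^\d)-y^\d$. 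For indices $n<n_\d$, where the discrepancy principle (\ref{DP}) supplies $\|z\|>\tau\d$, this gives
\begin{align*}
e_{n-1}^\d-e_n^\d \ge D_{\xi_{n-1}^\d}\Theta(x_n^\d,x_{n-1}^\d)+\frac{c}{\a_n}\|F(x_n^\d)-y^\d\|^r,\qquad c:=\frac{(1-\eta)\tau-(1+\eta)}{\tau}>0,
\end{align*}
the positivity of $c$ being exactly the content of $\tau>(1+\eta)/(1-\eta)$. This inequality is the engine of the whole proof: $\{e_n^\d\}$ decreases and the residuals are drained at a quantified rate.

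Finite termination then follows by summing from $n=1$ to $n_\d-1$, dropping the Bregman terms, bounding each residual below by $\tau\d$, and using $e_{n_\d-1}^\d\ge0$ to get $\sum_{n=1}^{n_\d-1}\a_n^{-1}\le c^{-1}(\tau\d)^{-r}D_{\xi_0}\Theta(x^\dag,x_0)$; because $\sum_n\a_n^{-1}=\infty$, $n_\d$ is finite. The step I expect to be the main obstacle is hidden here: the dissipation estimate only makes sense once every iterate lies in $B_{3\rho}(x_0)$, the region where Assumption \ref{A1}(d) is assumed, yet proving the iterates stay there relies on the monotonicity $e_n^\d\le e_0$ one is trying to establish. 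Breaking this circularity requires an induction in which $e_n^\d\le e_0=D_{\xi_0}\Theta(x^\dag,x_0)$ and the uniform convexity $\varphi(\|x^\dag-x_n^\d\|)\le e_n^\d$ combine with $\|x^\dag-x_0\|\le\rho$ to give $\|x_n^\d-x_0\|\le2\rho$, while the single admissible increase of $e_n^\d$ at the terminal index $n_\d$---where the lower bound $\|z\|>\tau\d$ is lost---must be absorbed. This is precisely what the calibrated constant $(\tau^r-1)/(\tau^r-1+c_0)$ in (\ref{eq:20.8june}) and the quasi-monotonicity $\a_n\le c_0\a_{n+1}$ are engineered to control; they also guarantee $D_{\xi_0}\Theta(x^\dag,x_0)\le\varphi(\rho)$, so that Lemma \ref{lem existence and uniqueness of xdag} applies and $x^\dag$ is well defined.

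Next I would analyse the exact-data iteration (\ref{eq:method}). The same computation with $\d=0$ yields $\sum_n\a_n^{-1}\|F(x_n)-y\|^r\le D_{\xi_0}\Theta(x^\dag,x_0)<\infty$, so $\|F(x_n)-y\|\to0$; a Cauchy argument on the Bregman distances, comparing $D_{\xi_m}\Theta(x_n,x_m)$ across indices through (\ref{4.3.1}), shows $\{x_n\}$ converges strongly to a solution $x_*$ with $\xi_n\to\xi_*\in\p\Theta(x_*)$ and $D_{\xi_n}\Theta(x_*,x_n)\to0$, where the passage from weak to strong convergence rests on the Kadec property of Lemma \ref{lem:Kadec}. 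Independently I would establish stability: for each fixed $n$, $x_n^\d\to x_n$, $\xi_n^\d\to\xi_n$ and $\Theta(x_n^\d)\to\Theta(x_n)$ as $\d\to0$. This is a continuity argument run through the minimization problem defining $x_n^\d$---weak compactness of the iterates, weak lower semicontinuity and uniform convexity of the objective, and Lemma \ref{lem:Kadec}---and it is here that Assumption \ref{A2} is indispensable, since uniqueness of $x_n$ identifies the limit of every convergent subsequence.

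To conclude I would note that the dissipation estimate holds verbatim with $x^\dag$ replaced by the noise-free limit $x_*$, so $\tilde e_n^\d:=D_{\xi_n^\d}\Theta(x_*,x_n^\d)$ is also non-increasing for $n<n_\d$. Fixing $N$ and taking $\d$ small enough that $n_\d>N$, monotonicity gives $\tilde e_{n_\d-1}^\d\le\tilde e_N^\d$, which tends to $D_{\xi_N}\Theta(x_*,x_N)$ by stability and then to $0$ as $N\to\infty$ by the noise-free convergence; controlling the last step as above yields $\tilde e_{n_\d}^\d\to0$, so $\varphi(\|x_*-x_{n_\d}^\d\|)\le\tilde e_{n_\d}^\d\to0$ gives $x_{n_\d}^\d\to x_*$, $D_{\xi_{n_\d}^\d}\Theta(x_*,x_{n_\d}^\d)\to0$ and $\Theta(x_{n_\d}^\d)\to\Theta(x_*)$. (Should $n_\d$ remain bounded along some sequence $\d\to0$, the noise-free iteration has already reached a solution, $J_r(0)=0$ freezes the iterates, $x_*=x_{n_\d}$, and $x_{n_\d}^\d\to x_*$ by stability.) Finally, for $x_*=x^\dag$ under $\N(F'(x^\dag))\subset\N(F'(x))$, each increment $\xi_n-\xi_{n-1}\in\R(F'(x_n)^*)\subset\N(F'(x_n))^\perp\subset\N(F'(x^\dag))^\perp=\overline{\R(F'(x^\dag)^*)}$ by (\ref{eq:28.1june}), so $\xi_*-\xi_0\in\N(F'(x^\dag))^\perp$; since Assumption \ref{A1}(d) at $x^\dag$ forces $x_*-x^\dag\in\N(F'(x^\dag))$, the identity (\ref{4.3.1}) gives
\begin{align*}
D_{\xi_0}\Theta(x^\dag,x_0)-D_{\xi_0}\Theta(x_*,x_0)=D_{\xi_*}\Theta(x^\dag,x_*)+\l\xi_*-\xi_0,\,x^\dag-x_*\r=D_{\xi_*}\Theta(x^\dag,x_*)\ge0.
\end{align*}
Hence $D_{\xi_0}\Theta(x_*,x_0)\le D_{\xi_0}\Theta(x^\dag,x_0)$, which by the minimality of $x^\dag$ forces equality, so $\varphi(\|x^\dag-x_*\|)\le D_{\xi_*}\Theta(x^\dag,x_*)=0$ and $x_*=x^\dag$ in view of Lemma \ref{lem existence and uniqueness of xdag}.
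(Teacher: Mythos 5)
Your overall architecture coincides with the paper's: your dissipation estimate and the induction keeping the iterates in $B_{3\rho}(x_0)$ are the content of Lemma \ref{lem stop}, your noise-free analysis is Theorem \ref{thm1}, your stability step is Lemma \ref{lem xdelta}, and your two-case combination (bounded $n_\d$ versus $n_\d\to\infty$, including the frozen-iteration observation) is exactly the paper's proof. There are, however, two genuine gaps, and both occur at the points the paper itself flags as delicate. First, you assert that $\tilde e_{n_\d}^\d:=D_{\xi_{n_\d}^\d}\Theta(x_*,x_{n_\d}^\d)\to 0$ together with $x_{n_\d}^\d\to x_*$ ``gives'' $\Theta(x_{n_\d}^\d)\to\Theta(x_*)$. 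It does not: writing out the Bregman distance,
\begin{equation*}
\Theta(x_*)-\Theta(x_{n_\d}^\d)=\tilde e_{n_\d}^\d+\l \xi_{n_\d}^\d,\, x_*-x_{n_\d}^\d\r,
\end{equation*}
the first term vanishes, but the pairing term is not controlled by strong convergence of $x_{n_\d}^\d$ alone, because nothing bounds $\|\xi_{n_\d}^\d\|$ (it is a sum of $n_\d\to\infty$ increments) and $\Theta$ is not assumed continuous. The paper warns about precisely this after Theorem \ref{th3} and devotes Lemma \ref{lem 19june} to it: the telescoping estimate (\ref{eq:19.9june}), namely $|\l \xi_{n_\d}^\d-\xi_l^\d, x_*-x_{n_\d}^\d\r|\le C_2\,\d^r/\a_{n_\d}+C_3\, D_{\xi_l^\d}\Theta(x_*,x_l^\d)$, combined with the stability Lemma \ref{lem xdelta} at a fixed index $l_0$ and with $\d^r/\a_{n_\d}\to 0$, is what yields $\l\xi_{n_\d}^\d, x_*-x_{n_\d}^\d\r\to 0$. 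This piece of machinery is absent from your proposal.

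Second, your proof of $x_*=x^\dag$ rests on a limit $\xi_*=\lim_{n\to\infty}\xi_n$ with $\xi_*\in\p\Theta(x_*)$, which you never establish and which does not follow from the available estimates: $\|\xi_n-\xi_{n-1}\|\le \a_n^{-1}\|F'(x_n)\|\,\|F(x_n)-y\|^{r-1}$, and the energy bound only gives $\sum_n\a_n^{-1}\|F(x_n)-y\|^r<\infty$, which does not make $\sum_n\a_n^{-1}\|F(x_n)-y\|^{r-1}$ finite (the natural H\"{o}lder argument fails exactly because $\sum_n\a_n^{-1}=\infty$). Accordingly, Theorem \ref{thm1} makes no claim about convergence of $\{\xi_n\}$. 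The paper avoids $\xi_*$ altogether: it proves $\l\xi_n-\xi_0, x_n-x^\dag\r\to 0$ by splitting at a fixed $l_0$, approximating each increment $\xi_j-\xi_{j-1}\in\overline{\R(F'(x^\dag)^*)}$ by $F'(x^\dag)^*v_j+\beta_j$ with $\|\beta_j\|$ small and using $\|F(x_n)-y\|\to 0$, and then passes to the limit in $D_{\xi_0}\Theta(x_n,x_0)\le D_{\xi_0}\Theta(x^\dag,x_0)+\l\xi_n-\xi_0,x_n-x^\dag\r$ --- a passage that again requires $\Theta(x_n)\to\Theta(x_*)$, i.e.\ the noise-free analogue of the first gap. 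Your subspace idea can be repaired without invoking $\xi_*$: for each fixed $l_0$ the element $\xi_{l_0}-\xi_0$ lies in the (weakly closed) subspace $\N(F'(x^\dag))^\perp$, while $x_n-x^\dag\to x_*-x^\dag\in\N(F'(x^\dag))$ strongly, so $\l\xi_{l_0}-\xi_0,x_n-x^\dag\r\to 0$, and the tail $\l\xi_n-\xi_{l_0},x_n-x^\dag\r$ is controlled by your own telescoping bound; but as written, the argument has a hole exactly where the existence of $\xi_*$ is invoked.
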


In this result, the closeness condition (\ref{eq:20.8june}) is used to
guarantee that $x_n^\d$ is in $B_{3\rho}(x_0)$ for $0\le n\le n_\d$ so that Assumption \ref{A1} (d) can
be applied. This issue does not appear when $F: \X\to \Y$ is a bounded linear operator. Furthermore,
Assumption \ref{A2} holds automatically for linear problems when $\Theta$ is strictly convex.
Consequently, we have the following convergence result for linear inverse problems.

\begin{theorem}\label{th3}
Let $F: \X \to \Y$ be a bounded linear operator with $\X$ being reflexive and $\Y$ being uniformly smooth,
let $\Theta$ be proper, weakly lower semi-continuous, and uniformly convex,
let $1<r<\infty$, and let $\{\alpha_n\}$ be such that
$\sum_{n=1}^{\infty}\alpha_n^{-1} = \infty$ and $\alpha_n\leq c_0\alpha_{n+1}$ for all $n$ with
$c_0>0$. Then, the discrepancy principle \eqref{DP} with $\tau>1$ terminates the method
after $n_{\delta}<\infty$ steps. Moreover, there hold
\begin{align*}
x_{n_\d}^\d\rightarrow x^\dag, \quad  \Theta(x_{n_\d}^\d) \rightarrow \Theta(x^\dag) \quad
\mbox{and} \quad D_{\xi_{n_{\delta}}^{\delta}} \Theta(x^\dag, x_{n_{\delta}}^{\delta}) \rightarrow 0
\end{align*}
as $\d\rightarrow 0$.
\end{theorem}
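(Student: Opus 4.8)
The plan is to deduce Theorem~\ref{th3} from Theorem~\ref{th2} by checking that a bounded linear operator fits the hypotheses of the latter, with the two technical restrictions — the closeness condition~\eqref{eq:20.8june} and Assumption~\ref{A2} — becoming either automatic or superfluous. First I would record that when $F$ is bounded and linear we may take $\eta=0$ in Assumption~\ref{A1}(d): indeed $D(F)=\X$ is convex, $F$ is weakly--weakly continuous and hence weakly closed so that (a) holds, $F'(x)\equiv F$ is trivially continuous giving (c), and the residual $F(\bar x)-F(x)-F'(x)(\bar x-x)=F\bar x-Fx-F(\bar x-x)=0$ vanishes identically, so (d) holds globally with $\eta=0$. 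In particular the linearization identity is not confined to any ball, which is the crucial simplification.

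Next I would settle the uniqueness statements. Since $F$ is linear the solution set $\{x:Fx=y\}$ is a closed affine subspace, hence convex; existence of a constrained minimizer in~\eqref{eq definition of xdag} is standard as already noted in the text, while its uniqueness now follows from strict convexity of $x\mapsto D_{\xi_0}\Theta(x,x_0)$ alone (uniform convexity of $\Theta$ implies strict convexity), \emph{without} invoking the closeness bound of Lemma~\ref{lem existence and uniqueness of xdag}. The same strict convexity applies to the objective in~\eqref{eq method} and~\eqref{eq:method}: the data term $\frac1r\|Fx-y^\d\|^r$ is convex as the composition of a convex function with an affine map, while the penalty $D_{\xi_{n-1}}\Theta(\cdot,x_{n-1})$ differs from $\Theta$ by an affine term and is therefore strictly convex; the sum is strictly convex, so each $x_n^\d$ and each $x_n$ is unique and Assumption~\ref{A2} holds automatically.

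With these preparations the proof of Theorem~\ref{th2} applies with $\rho$ unrestricted: every step in the noise-free analysis of Section~\ref{noise-free} and in the stability analysis that invoked Assumption~\ref{A1}(d) only inside $B_{3\rho}(x_0)$ may now be applied globally, because $\eta=0$ makes (d) an unconditional identity on all of $\X$. Consequently the sole role of~\eqref{eq:20.8june}, namely to confine the iterates to $B_{3\rho}(x_0)$, disappears and the condition may be dropped; likewise $\tau>(1+\eta)/(1-\eta)$ collapses to $\tau>1$. Finally, since $F'(x)\equiv F$, the extra hypothesis $\N(F'(x^\dag))\subset\N(F'(x))$ of Theorem~\ref{th2} is the trivially true inclusion $\N(F)\subset\N(F)$, so the limit $x_*$ is forced to equal $x^\dag$, yielding the sharpened conclusion $x_{n_\d}^\d\to x^\dag$ together with $\Theta(x_{n_\d}^\d)\to\Theta(x^\dag)$ and $D_{\xi_{n_\d}^\d}\Theta(x^\dag,x_{n_\d}^\d)\to 0$.

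The step I expect to require the most care is the claim that~\eqref{eq:20.8june} can be removed outright. One must re-read the proof of Theorem~\ref{th2} to confirm that $B_{3\rho}(x_0)$ enters \emph{only} through applications of Assumption~\ref{A1}(d) and through the well-definedness arguments, and that no estimate secretly needs an a priori bound on $\|x_n^\d-x_0\|$ beyond what the monotonicity~\eqref{mono} and the bound~\eqref{pconv} already supply once $\eta=0$. Provided this bookkeeping checks out — which I expect it does, since with $\eta=0$ the tangential cone condition holds on the whole space — the reduction is complete.
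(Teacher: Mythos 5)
Your proposal is correct and is essentially the paper's own argument: the paper justifies Theorem~\ref{th3} by exactly this reduction to Theorem~\ref{th2}, observing that for bounded linear $F$ Assumption~\ref{A1}(d) holds globally with $\eta=0$ (so the closeness condition~\eqref{eq:20.8june} and the ball $B_{3\rho}(x_0)$ are superfluous and $\tau>1$ suffices), that Assumption~\ref{A2} is automatic by strict convexity, and that $\N(F'(x^\dag))\subset\N(F'(x))$ holds trivially since $F'(x)\equiv F$, forcing $x_*=x^\dag$. Your bookkeeping of where the ball enters the proofs matches the paper's reasoning, so nothing further is needed.
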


In the next section, we will give the detailed proof of Theorem \ref{th2}. It should be pointed out that
the convergence $x_{n_\d}^\d\rightarrow x_*$ does not imply $\Theta(x_{n_\d}^\d) \rightarrow \Theta(x_*)$ directly
since $\Theta$ is not necessarily continuous. The proof of $\Theta(x_{n_\d}^\d)\rightarrow \Theta(x_*)$
relies on additional observation.

When applying our convergence result to the situation that $\X=L^2(\Omega)$ and $\Theta(x)=\mu\int_\Omega |x(\omega)|^2 d\omega
+\int_\Omega |Dx|$ with $\mu>0$, we can obtain
$$
\|x_{n_\d}^\d-x^\dag\|_{L^2(\Omega)} \rightarrow 0 \quad \mbox{and} \quad
\int_\Omega |D x_{n_\d}^\d| \rightarrow \int_\Omega |Dx| \quad \mbox{ as } \d\rightarrow 0.
$$
This significantly improves the result in \cite{BB2009} in which only the boundedness of $\Theta(x_{n_\d}^\d)$
was derived and hence only weak convergence for a subsequence of $\{x_{n_\d}^\d\}$ can be guaranteed.

We conclude this section with some sufficient condition to guarantee the validity of Assumption \ref{A2}.

\begin{Assumption}\label{A3}
There exist $C_0 \ge 0$ and $1/r \le \kappa <1$ such that
\begin{align*}
 \|F(\bar x)-F(x) - F'(x)(\bar x-x)\| \le C_0 \left[D_{\xi} \Theta(\bar x,x)\right]^{1-\kappa}
\left[\Delta_r(F(\bar x)-y, F(x)-y)\right]^\kappa
\end{align*}
for all $\bar x, x\in B_{3\rho}(x_0)\cap D(\Theta) \cap D(F)$ with $x\in D(\p \Theta)$
and $\xi\in\partial \Theta(x)$, where $\Delta_r(\cdot, \cdot)$ denotes the Bregman distance on $\Y$ induced by
the convex function $\|y\|^r/r$.
\end{Assumption}

When $\Y$ is a $r$-uniformly convex Banach space, $\Theta$ is a $p$-uniformly convex function on $\X$ with $p\ge 2$,
and $1/p+1/r\le 1$, Assumption \ref{A3} holds with $\kappa=1-1/p$ if there is a constant $C_1\ge 0$ such that
\begin{equation}\label{A1-s}
\|F(\bar x)-F(x)-F'(x) (\bar x-x) \| \le C_1 \|\bar x-x\| \|F(\bar x) -F(x)\|
\end{equation}
for $\bar x, x\in B_{3\rho}(x_0)\cap D(F)$, which is a slightly strengthened version of Assumption~\ref{A1}~(d).

\begin{lemma}\label{lem:A3}
Let $\X$ be reflexive and $\Y$ be uniformly smooth, let $1<r<\infty$, let $\Theta$ satisfy Assumption \ref{A0},
let $F$ satisfy Assumptions \ref{A1} and \ref{A3}, and let $\{\a_n\}$ satisfy $\sum_{n=1}^\infty \a_n^{-1}=\infty$.
Assume that
\begin{align}\label{21.11june}
D_{\xi_0} \Theta(x^\dag, x_0)\le \varphi (\rho) \quad \mbox{ and } \quad
\bar{C}_0 \left[D_{\xi_0} \Theta(x^\dag, x_0)\right]^{1-\frac{1}{r}} <1
\end{align}
with $\bar{C}_0:=C_0 \kappa^\kappa (1-\kappa)^{1-\kappa} (1-\eta)^{\frac{1-r}{r}}
\a_1^{\kappa-\frac{1}{r}}$. Then Assumption \ref{A2} holds, i.e. $x_n$ is uniquely defined for each $n$.
\end{lemma}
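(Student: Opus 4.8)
The plan is to argue by induction on $n$, establishing at each stage three facts together: that every minimizer $x_n$ of $\frac1r\|F(x)-y\|^r+\alpha_n D_{\xi_{n-1}}\Theta(x,x_{n-1})$ lies in $B_{3\rho}(x_0)$, that such a minimizer is unique, and that the Bregman distance to $x^\dag$ does not increase, i.e. $D_{\xi_n}\Theta(x^\dag,x_n)\le D_{\xi_{n-1}}\Theta(x^\dag,x_{n-1})$. The base case is $D_{\xi_0}\Theta(x^\dag,x_0)\le\varphi(\rho)$, which is the first half of (\ref{21.11june}). The difficulty is that for nonlinear $F$ the residual term is not convex, so uniqueness cannot come from convexity of the objective alone; it must be extracted from the uniform convexity of $\Theta$ together with the tangential-type bound in Assumption \ref{A3}.

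For the ball membership I would compare any minimizer $x_n$ with $x^\dag$: minimality and $F(x^\dag)=y$ give $\frac1r\|F(x_n)-y\|^r+\alpha_n D_{\xi_{n-1}}\Theta(x_n,x_{n-1})\le\alpha_n D_{\xi_{n-1}}\Theta(x^\dag,x_{n-1})$, whence $D_{\xi_{n-1}}\Theta(x_n,x_{n-1})\le D_{\xi_{n-1}}\Theta(x^\dag,x_{n-1})\le\varphi(\rho)$, and (\ref{pconv}) forces $\|x_n-x_{n-1}\|\le\rho$; combined with $\|x^\dag-x_{n-1}\|\le\rho$ and $\|x^\dag-x_0\|\le\rho$ this yields $x_n\in B_{3\rho}(x_0)$, so Assumptions \ref{A1}(d) and \ref{A3} apply. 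The same comparison, fed through the identity (\ref{4.3.1}) and the update $\xi_n-\xi_{n-1}=-\alpha_n^{-1}F'(x_n)^*J_r(F(x_n)-y)$ together with Assumption \ref{A1}(d), produces $D_{\xi_n}\Theta(x^\dag,x_n)\le D_{\xi_{n-1}}\Theta(x^\dag,x_{n-1})-\frac{1-\eta}{\alpha_n}\|F(x_n)-y\|^r$, which gives both the monotonicity closing the induction and the residual bound $\|F(x_n)-y\|^r\le\frac{\alpha_n}{1-\eta}D_{\xi_{n-1}}\Theta(x^\dag,x_{n-1})$.

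The heart is the uniqueness step. Suppose $x_n$ and $\tilde x_n$ are two minimizers; equating the (equal) objective values and eliminating the difference of Bregman distances through (\ref{4.3.1}) and $\xi_n\in\partial\Theta(x_n)$, the residual part reorganizes, via the definition of $\Delta_r$, into
$$\Delta_r(F(\tilde x_n)-y,F(x_n)-y)+\alpha_n D_{\xi_n}\Theta(\tilde x_n,x_n)=-\langle J_r(F(x_n)-y),\,F(\tilde x_n)-F(x_n)-F'(x_n)(\tilde x_n-x_n)\rangle.$$
Writing $A$ and $B$ for the two nonnegative terms on the left, I would bound the right side by duality ($\|J_r(F(x_n)-y)\|=\|F(x_n)-y\|^{r-1}$) and Assumption \ref{A3} to obtain $A+\alpha_n B\le C_0\|F(x_n)-y\|^{r-1}B^{1-\kappa}A^\kappa$. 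Young's inequality in the sharp form $B^{1-\kappa}A^\kappa\le\kappa^\kappa(1-\kappa)^{1-\kappa}\alpha_n^{\kappa-1}(A+\alpha_n B)$ then reduces everything to showing $C_0\|F(x_n)-y\|^{r-1}\kappa^\kappa(1-\kappa)^{1-\kappa}\alpha_n^{\kappa-1}<1$; once this holds, $A=B=0$, and $B=D_{\xi_n}\Theta(\tilde x_n,x_n)\ge\varphi(\|\tilde x_n-x_n\|)$ forces $\tilde x_n=x_n$.

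The last estimate is where the shape of $\bar C_0$ and the main obstacle appear: I must control $\|F(x_n)-y\|^{r-1}\alpha_n^{\kappa-1}$ uniformly in $n$ although $\{\alpha_n\}$ is only assumed to satisfy $\sum\alpha_n^{-1}=\infty$, with no monotonicity. I would use two residual bounds in tandem: the descent bound $\|F(x_n)-y\|^r\le\frac{\alpha_n}{1-\eta}D_{\xi_{n-1}}\Theta(x^\dag,x_{n-1})$ above, and the monotonicity (\ref{mono}) giving $\|F(x_n)-y\|\le\|F(x_1)-y\|$ with $\|F(x_1)-y\|^r\le\frac{\alpha_1}{1-\eta}D_{\xi_0}\Theta(x^\dag,x_0)$; together with the induction hypothesis $D_{\xi_{n-1}}\Theta(x^\dag,x_{n-1})\le D_{\xi_0}\Theta(x^\dag,x_0)$ these give $\|F(x_n)-y\|^r\le\frac{\min\{\alpha_n,\alpha_1\}}{1-\eta}D_{\xi_0}\Theta(x^\dag,x_0)$. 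Substituting and using that $[\min\{\alpha_n,\alpha_1\}]^{1-1/r}\alpha_n^{\kappa-1}\le\alpha_1^{\kappa-1/r}$ in both cases $\alpha_n\le\alpha_1$ and $\alpha_n>\alpha_1$ (since $\kappa-1/r\ge0$ and $\kappa-1<0$) collapses the bound to exactly $\bar C_0\,[D_{\xi_0}\Theta(x^\dag,x_0)]^{1-1/r}$, which is $<1$ by the second half of (\ref{21.11june}), the desired contradiction. This dichotomy on $\alpha_n$ versus $\alpha_1$ is the one genuinely delicate point; everything else is bookkeeping with (\ref{4.3.1}) and Young's inequality.
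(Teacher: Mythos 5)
Your proposal is correct and follows essentially the same route as the paper's proof: the same identity $\Delta_r(F(\tilde x_n)-y,F(x_n)-y)+\alpha_n D_{\xi_n}\Theta(\tilde x_n,x_n)=-\langle J_r(F(x_n)-y),\,F(\tilde x_n)-F(x_n)-F'(x_n)(\tilde x_n-x_n)\rangle$ for two minimizers, the same application of Assumption \ref{A3} plus Young's inequality, and the same residual estimate (the paper bounds $\|F(x_n)-y\|^r$ by $\frac{1}{1-\eta}s_n^{-1}D_{\xi_0}\Theta(x^\dag,x_0)$ with $s_n=\sum_{j\le n}\alpha_j^{-1}$ and then uses $s_n^{-1}\le\min\{\alpha_1,\alpha_n\}$, which is exactly your $\min\{\alpha_1,\alpha_n\}$ bound obtained via residual monotonicity). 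The only cosmetic differences are that you absorb $A+\alpha_nB$ jointly with the sharp constant $\kappa^\kappa(1-\kappa)^{1-\kappa}$ whereas the paper absorbs only the $\Delta_r$ term with coefficient one, and your case split $\alpha_n\lessgtr\alpha_1$ replaces the paper's splitting of powers of $s_n^{-1}$; both yield the identical threshold $\bar C_0\left[D_{\xi_0}\Theta(x^\dag,x_0)\right]^{1-1/r}<1$.
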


We will prove Lemma \ref{lem:A3} at the end of Section \ref{subsect4.1} by using some useful estimates
that will be derived during the proof of the convergence of $\{x_n\}$.

\section{\bf Convergence analysis}\label{Sect3}
\setcounter{equation}{0}

We prove Theorem \ref{th2} in this section. We first obtain a convergence result for the noise-free iterative
sequences $\{x_n\}$ and $\{\xi_n\}$. We then consider the sequences $\{x_n^\d\}$ and $\{\xi_n^\d\}$ corresponding
to the noisy data case, and show that the discrepancy principle indeed terminates the iteration in finite steps.
We further establish a stability result which in particular implies that $x_n^\d\rightarrow x_n$ as $\d\rightarrow 0$ for
each fixed $n$. Combining all these results we finally obtain the proof of Theorem \ref{th2}.

\subsection{\bf Convergence result for noise-free case}\label{noise-free}\label{subsect4.1}

We first consider the noise-free iterative sequences $\{x_n\}$ and $\{\xi_n\}$ defined by (\ref{eq:method})
and obtain a convergence result that is crucial for proving Theorem \ref{th2}. Our proof is inspired by
\cite{HNS96,JS2012}.

\begin{theorem}\label{thm1}
Let $\X$ be reflexive and $\Y$ be uniformly smooth, let $1<r<\infty$, let $\Theta$ satisfy Assumption \ref{A0},
let $F$ satisfy Assumption \ref{A1}, and let $\{\a_n\}$ satisfy $\sum_{n=1}^\infty \a_n^{-1}=\infty$. Assume that
\begin{align}\label{21.1june}
D_{\xi_0} \Theta(x^\dag, x_0)\le \varphi (\rho).
\end{align}
Then there exists a solution $x_*$ of (\ref{1.1}) in $B_{3\rho}(x_0)\cap D(\Theta)$ such that
$$
\lim_{n\rightarrow \infty} \|x_n-x_*\|=0, \quad \lim_{n\rightarrow \infty} \Theta(x_n)=\Theta(x_*)
\quad \mbox{and} \quad \lim_{n\rightarrow \infty} D_{\xi_n} \Theta(x_*, x_n)=0.
$$
If in addition $\N(F'(x^\dag))\subset \N(F'(x))$ for all $x\in B_{3\rho}(x_0)\cap D(F)$, then $x_*=x^\dag$.
\end{theorem}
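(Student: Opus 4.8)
The plan is to monitor the Bregman distance $D_{\xi_n}\Theta(x^\dag,x_n)$ of the iterates to the reference solution $x^\dag$, show it decreases monotonically, and then upgrade this to strong convergence of $\{x_n\}$ via a Cauchy argument carried out in the Bregman distance. First I would establish, by induction on $n$, that every iterate stays in $B_{3\rho}(x_0)$ and that $D_{\xi_n}\Theta(x^\dag,x_n)$ is non-increasing. The delicate point here is an apparent circularity: Assumption \ref{A1}(d) is available only for points in $B_{3\rho}(x_0)$, yet I need it to control $x_n$. I would break this by first comparing $x_n$ with $x^\dag$ in the minimization (\ref{eq:method}): since $F(x^\dag)=y$, minimality gives $D_{\xi_{n-1}}\Theta(x_n,x_{n-1})\le D_{\xi_{n-1}}\Theta(x^\dag,x_{n-1})\le\varphi(\rho)$, so by (\ref{pconv}) one has $\|x_n-x_{n-1}\|\le\rho$ and hence $x_n\in B_{3\rho}(x_0)$. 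With $x_n$ now located in the ball, Assumption \ref{A1}(d) applies, and using the three-point identity (\ref{4.3.1}) with base points $x_{n-1},x_n$ together with the update for $\xi_n$ and $\l J_r(u),u\r=\|u\|^r$, I would derive the key inequality
\[
D_{\xi_{n-1}}\Theta(x^\dag,x_{n-1})-D_{\xi_n}\Theta(x^\dag,x_n)\ge \frac{1-\eta}{\a_n}\|F(x_n)-y\|^r\ge 0 .
\]
This gives monotonicity, the bound $D_{\xi_n}\Theta(x^\dag,x_n)\le\varphi(\rho)$, and therefore $\|x_n-x^\dag\|\le\rho$, closing the induction.

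Summing the key inequality over $n$ yields $\sum_n\a_n^{-1}\|F(x_n)-y\|^r\le D_{\xi_0}\Theta(x^\dag,x_0)<\infty$. Since minimality also gives the residual monotonicity $\|F(x_n)-y\|\le\|F(x_{n-1})-y\|$ (compare $x_n$ with $x_{n-1}$), the combination with the divergence condition $\sum_n\a_n^{-1}=\infty$ forces $\|F(x_n)-y\|\to0$.

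For the Cauchy property, for $n\ge m$ I would apply (\ref{4.3.1}) once more to write
\[
D_{\xi_m}\Theta(x_n,x_m)=\bigl[D_{\xi_m}\Theta(x^\dag,x_m)-D_{\xi_n}\Theta(x^\dag,x_n)\bigr]-\l \xi_n-\xi_m, x^\dag-x_n\r .
\]
The bracket tends to $0$ because $D_{\xi_n}\Theta(x^\dag,x_n)$ is monotone, hence convergent. For the second term I would telescope $\xi_n-\xi_m$ and estimate each $F'(x_k)(x^\dag-x_n)$ by splitting $x^\dag-x_n=(x^\dag-x_k)+(x_k-x_n)$ and invoking Assumption \ref{A1}(d) together with the residual monotonicity; this bounds $|\l\xi_n-\xi_m,x^\dag-x_n\r|$ by $C\sum_{k=m+1}^{n}\a_k^{-1}\|F(x_k)-y\|^r$, a tail of the convergent series, so it vanishes as $m\to\infty$. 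Thus $D_{\xi_m}\Theta(x_n,x_m)\to0$, and by (\ref{pconv}) $\{x_n\}$ is Cauchy and converges strongly to some $x_*\in B_{3\rho}(x_0)$. Weak closedness of $F$ (Assumption \ref{A1}(a)) together with $\|F(x_n)-y\|\to0$ gives $F(x_*)=y$. Letting $n\to\infty$ for fixed $m$ in $D_{\xi_m}\Theta(x_n,x_m)\to0$ and using the weak lower semicontinuity of $\Theta$ yields $D_{\xi_m}\Theta(x_*,x_m)\to0$, which is the third assertion.

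The main obstacle is the convergence $\Theta(x_n)\to\Theta(x_*)$, since $\Theta$ is not continuous: weak lower semicontinuity gives only $\Theta(x_*)\le\liminf\Theta(x_n)$. For the reverse direction I would prove $\l\xi_n,x_*-x_n\r\to0$ by the ``additional observation'': writing $\xi_n=\xi_0-\sum_{k\le n}\a_k^{-1}F'(x_k)^*J_r(F(x_k)-y)$ and splitting the sum at an index $m$, the tail is controlled by $C\sum_{k>m}\a_k^{-1}\|F(x_k)-y\|^r$ (again via Assumption \ref{A1}(d)), while for fixed $m$ the finitely many leading terms vanish as $n\to\infty$ because $F'(x_k)(x_*-x_n)\to0$ and $\l\xi_0,x_*-x_n\r\to0$. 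Combining this with $D_{\xi_n}\Theta(x_*,x_n)\to0$ and the identity $\Theta(x_n)=\Theta(x_*)-\l\xi_n,x_*-x_n\r-D_{\xi_n}\Theta(x_*,x_n)$ gives $\Theta(x_n)\to\Theta(x_*)$. Finally, for the identification $x_*=x^\dag$ under the condition $\N(F'(x^\dag))\subset\N(F'(x))$, I note that any solution $\hat x$ in the ball satisfies $\hat x-x_*\in\N(F'(x^\dag))$ (both solve the equation, by Assumption \ref{A1}(d)), so the hypothesis makes $F'(x_k)(\hat x-x_*)=0$ and hence $\l\xi_n,\hat x-x_*\r=\l\xi_0,\hat x-x_*\r$ for every $n$. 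Passing to the limit in the subgradient inequality $\Theta(\hat x)-\Theta(x_n)\ge\l\xi_n,\hat x-x_n\r$ then gives $D_{\xi_0}\Theta(x_*,x_0)\le D_{\xi_0}\Theta(\hat x,x_0)$, so $x_*$ minimizes $D_{\xi_0}\Theta(\cdot,x_0)$ over solutions; with $D_{\xi_0}\Theta(x_*,x_0)\le\varphi(\rho)$, the uniqueness in Lemma \ref{lem existence and uniqueness of xdag} yields $x_*=x^\dag$.
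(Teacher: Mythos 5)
Your proposal is correct, and its first three stages coincide with the paper's own proof: the induction that places $x_n\in B_{3\rho}(x_0)$ and yields the descent inequality $D_{\xi_{n-1}}\Theta(x^\dag,x_{n-1})-D_{\xi_n}\Theta(x^\dag,x_n)\ge (1-\eta)\a_n^{-1}\|F(x_n)-y\|^r$, the residual decay from $\sum_n\a_n^{-1}=\infty$, and the Cauchy argument combining the three-point identity (\ref{4.3.1}) with the telescoped bound $\|F'(x_k)(x^\dag-x_n)\|\le 3(1+\eta)\|F(x_k)-y\|$ are exactly the paper's steps. (Two cosmetic points: the summed bound should carry the factor $(1-\eta)^{-1}$, and ``hence $x_n\in B_{3\rho}(x_0)$'' also uses $\|x^\dag-x_{n-1}\|\le\rho$ and $\|x^\dag-x_0\|\le\rho$, both of which do follow from the inequality you display via (\ref{pconv}).)

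Where you genuinely diverge is in the last two stages, and in both cases your route is leaner. First, you obtain $D_{\xi_m}\Theta(x_*,x_m)\to 0$ directly, by letting $n\to\infty$ in the Cauchy estimate and using lower semicontinuity of $\Theta$ along the strongly convergent sequence, and then prove $\l \xi_n, x_*-x_n\r\to 0$ by a tail estimate on the telescoped dual updates. The paper proceeds in the opposite order: it first shows $x_*\in D(\Theta)$, then gets $\limsup_n|\l\xi_n,x_n-x_*\r|=0$ by a double-limit argument built on $\varepsilon_0:=\lim_n D_{\xi_n}\Theta(x_*,x_n)$, whose existence requires the descent inequality to hold with $x_*$ as comparison solution --- which is precisely why the paper runs its induction for arbitrary solutions $\hat x\in B_{3\rho}(x_0)\cap D(\Theta)$ rather than only for $x^\dag$. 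Your ordering needs the descent property only at $x^\dag$. Second, and more substantially, your identification $x_*=x^\dag$ is a genuinely different argument: you pair $\xi_n-\xi_0$ with the difference of two solutions $\hat x-x_*$, which by Assumption \ref{A1} (d) lies in $\N(F'(x^\dag))$ and hence, by the null-space hypothesis, in every $\N(F'(x_k))$, so that $\l\xi_n-\xi_0,\hat x-x_*\r=0$ exactly; passing to the limit in the subgradient inequality, using the already established facts $\Theta(x_n)\to\Theta(x_*)$ and $\l\xi_n,x_*-x_n\r\to 0$, then gives $D_{\xi_0}\Theta(x_*,x_0)\le D_{\xi_0}\Theta(x^\dag,x_0)$. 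The paper instead pairs with $x_n-x^\dag$, which is not a difference of solutions, and must therefore invoke reflexivity and (\ref{eq:28.1june}) to decompose $\xi_j-\xi_{j-1}=F'(x^\dag)^*v_j+\beta_j$ with $\|\beta_j\|$ arbitrarily small, letting the vanishing residuals absorb the $v_j$-terms. Your version dispenses with this approximation and with the dual-range characterization altogether; what the paper's variant buys is only the marginally stronger side fact $\l\xi_n-\xi_0,x_n-x^\dag\r\to 0$. Both arguments conclude identically, via the minimality (\ref{eq definition of xdag}) and the uniqueness in Lemma \ref{lem existence and uniqueness of xdag}.
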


\begin{proof}
We first show by induction that for any solution $\hat{x}$ of (\ref{1.1}) in $B_{3\rho}(x_0)\cap D(\Theta)$ there holds
\begin{align}\label{7.7.1}
D_{\xi_n}\Theta(\hat{x}, x_n)\le D_{\xi_0}\Theta(\hat{x}, x_0), \quad n=0, 1, \cdots.
\end{align}
This is trivial for $n=0$. Assume that it is true for $n=m-1$ for some $m\ge 1$, we will show that it is also true for $n=m$.
From (\ref{4.3.1}) we have
$$
D_{\xi_m}\Theta(\hat{x}, x_m)-D_{\xi_{m-1}}\Theta(\hat{x}, x_{m-1}) =-D_{\xi_{m-1}}\Theta(x_m, x_{m-1})
+\l \xi_{m-1}-\xi_m, \hat{x}-x_m\r.
$$
By dropping the first term on the right which is non-positive and using the definition of $\xi_m$ we can obtain
\begin{align*}
D_{\xi_m}\Theta(\hat{x}, x_m)-D_{\xi_{m-1}}&\Theta(\hat{x}, x_{m-1})
\le \frac{1}{\a_m} \l J_r(F(x_m)-y), F'(x_m)(\hat{x}-x_m)\r.
\end{align*}
In view of the properties of the duality mapping $J_r$ it follows that
\begin{align}\label{5.4.1}
&D_{\xi_m}\Theta(\hat{x}, x_m)-D_{\xi_{m-1}}\Theta(\hat{x}, x_{m-1})\nonumber\\
&\le -\frac{1}{\a_m} \|F(x_m)-y\|^r +\frac{1}{\a_m} \|F(x_m)-y\|^{r-1} \|F(x_m)-y+F'(x_m)(\hat{x}-x_m)\|.
\end{align}
In order to proceed further, we need to show that $x_m \in B_{3\rho}(x_0)$ so that Assumption \ref{A1} (d)
on $F$ can be employed. Using the minimizing property of $x_m$, the induction hypothesis, and (\ref{21.1june}) we obtain
$$
D_{\xi_{m-1}}\Theta(x_m, x_{m-1}) \le D_{\xi_{m-1}}\Theta(x^\dag, x_{m-1})
\le D_{\xi_0} \Theta(x^\dag, x_0) \le \varphi (\rho).
$$
With the help of Assumption \ref{A0} on $\Theta$, we have
$$
\|x_m-x_{m-1}\|\le \rho, \quad \|x^\dag-x_{m-1}\|\le \rho \quad
\mbox{and} \quad \|x^\dag-x_0\|\le \rho.
$$
Therefore $x_m\in B_{3\rho}(x_0)$. Thus we may use Assumption \ref{A1} (d)
to obtain from (\ref{5.4.1}) that
\begin{equation}\label{7.7.2}
D_{\xi_m}\Theta(\hat{x}, x_m)-D_{\xi_{m-1}}\Theta(\hat{x}, x_{m-1}) \le -\frac{1-\eta}{\a_m} \|F(x_m)-y\|^r.
\end{equation}
This and the induction hypothesis imply (\ref{7.7.1}) with $n=m$.

As an immediate consequence of (\ref{7.7.1}), we know that (\ref{7.7.2}) is true for all $m$. Consequently
\begin{align}\label{5.4.0}
D_{\xi_n}\Theta(\hat{x}, x_n)\le D_{\xi_{n-1}}\Theta(\hat{x}, x_{n-1}), \quad n=1,2, \cdots
\end{align}
and
\begin{align}\label{8.4.2012}
\frac{1-\eta}{\a_n}\|F(x_n)-y\|^r \le D_{\xi_{n-1}}\Theta(\hat{x}, x_{n-1})-D_{\xi_n}\Theta(\hat{x}, x_n).
\end{align}
By using the monotonicity of $\|F(x_n)-y\|$ with respect to $n$,  we obtain
\begin{align*}
\|F(x_n)-y\|^r \sum_{j=1}^n \frac{1}{\a_j} \le \sum_{j=1}^n \frac{1}{\a_j} \|F(x_j)-y\|^r
\le \frac{1}{1-\eta} D_{\xi_0} \Theta(\hat{x}, x_0).
\end{align*}
Since $\sum_{j=1}^n \a_j^{-1}\rightarrow \infty$ as $n\rightarrow \infty$, we have $\|F(x_n)-y\|\rightarrow 0$
as $n\rightarrow \infty$.

Next we show that $\{x_n\}$ converges to a solution of (\ref{1.1}). To this end, we show that $\{x_n\}$ is a
Cauchy sequence in $\X$. For $0\le l<m<\infty$ we have from (\ref{4.3.1}) that
$$
D_{\xi_l}\Theta(x_m, x_l)=D_{\xi_l}\Theta(\hat{x}, x_l)-D_{\xi_m}\Theta(\hat{x}, x_m)+\l \xi_m-\xi_l, x_m-\hat{x}\r.
$$
By the definition of $\xi_n$ we have
\begin{align}\label{eq:19.6june}
|\l \xi_m-\xi_l, x_m- \hat{x}\r| &= \left|\sum_{n=l+1}^m \l \xi_n-\xi_{n-1}, x_m-\hat{x}\r\right| \nonumber\\
& = \left|\sum_{n=l+1}^m \frac{1}{\a_n} \l J_r(F(x_n)-y), F'(x_n)(x_m-\hat{x})\r \right|\nonumber\\
&\le \sum_{n=l+1}^m \frac{1}{\a_n} \|F(x_n)-y\|^{r-1} \|F'(x_n)(x_m-\hat{x})\|.
\end{align}
By using Assumption \ref{A1} (d) on $F$ and the monotonicity of $\|F(x_n)-y\|$ we can obtain
\begin{align}\label{eq:20.1june}
\|F'(x_n)(x_m-\hat{x})\| & \le \|F'(x_n)(x_n-\hat{x})\| +\|F'(x_n)(x_m-x_n)\| \nonumber\\
& \le (1+\eta) \left(\|F(x_n)-y\| +\|F(x_m)-F(x_n)\|\right)\nonumber\\
& \le 3(1+\eta) \|F(x_n)-y\|.
\end{align}
Therefore, by using (\ref{8.4.2012}), we have with $c_0:=3(1+\eta)/(1-\eta)$ that
\begin{align}\label{5.5.1}
|\l \xi_m-\xi_l, x_m-\hat{x}\r| & \le 3(1+\eta) \sum_{n=l+1}^m \frac{1}{\a_n} \|F(x_n)-y\|^r \nonumber\\
&\le c_0 \left(D_{\xi_l}\Theta(\hat{x}, x_l)-D_{\xi_m}\Theta(\hat{x}, x_m)\right).
\end{align}
Consequently
$$
D_{\xi_l}\Theta(x_m, x_l) \le (1+c_0) \left(D_{\xi_l}\Theta(\hat{x}, x_l)-D_{\xi_m}\Theta(\hat{x}, x_m)\right).
$$
Since $\{D_{\xi_n}\Theta(\hat{x}, x_n)\}$ is monotonically decreasing, we obtain
$D_{\xi_l}\Theta(x_m, x_l)\rightarrow 0$ as $l,m\rightarrow \infty$. In view of the uniformly convexity of $\Theta$,
we can conclude that $\{x_n\}$ is a Cauchy sequence in $\X$. Thus $x_n\rightarrow x_*$ for some $x_*\in \X$
as $n\rightarrow \infty$. Since $\|F(x_n)-y\|\rightarrow 0$ as $n\rightarrow \infty$, we may use the weakly closedness of $F$
to conclude that $x_*\in D(F)$ and $F(x_*)=y$. We remark that $x_*\in B_{3\rho}(x_0)$ because $x_n\in B_{3\rho}(x_0)$.

Next we show that
$$
x_*\in D(\Theta), \quad \lim_{n\rightarrow \infty} \Theta(x_n)=\Theta(x_*)
\quad \mbox{and} \quad \lim_{n\rightarrow \infty} D_{\xi_n} \Theta(x_*, x_n)=0.
$$
From the convexity of $\Theta$ and $\xi_n\in \p \Theta(x_n)$ it follows that
\begin{align}\label{5.5.3}
\Theta(x_n)\le \Theta(\hat{x}) +\l \xi_n, x_n-\hat{x}\r.
\end{align}
In view of (\ref{5.5.1}) we have
$$
\Theta(x_n)\le \Theta(\hat{x}) +\l \xi_0, x_n-\hat{x}\r + c_0 D_{\xi_0} \Theta(\hat{x}, x_0).
$$
Since $x_n\rightarrow x_*$ as $n\rightarrow \infty$, by using the weakly lower semi-continuity of $\Theta$ we obtain
\begin{equation}\label{5.5.2}
\Theta(x_*)\le \liminf_{n\rightarrow\infty} \Theta(x_n)\le
\Theta(\hat{x}) +\l \xi_0, x_*-\hat{x}\r + c_0 D_{\xi_0} \Theta(\hat{x}, x_0)<\infty.
\end{equation}
This implies that $x_*\in D(\Theta)$. We next use (\ref{5.5.1}) to derive for $l<n$ that
\begin{align*}
|\l\xi_n, x_n-x_*\r|
\le c_0 \left(D_{\xi_l} \Theta(x_*, x_l) -D_{\xi_n} \Theta(x_*, x_n)\right)
+|\l \xi_l, x_n-x_*\r|.
\end{align*}
By taking $n\rightarrow \infty$ and using $x_n\rightarrow x_*$ we can derive that
\begin{align*}
\limsup_{n\rightarrow \infty} |\l\xi_n, x_n-x_*\r|
\le c_0 \left(D_{\xi_l} \Theta(x_*, x_l) - \varepsilon_0\right),
\end{align*}
where $\varepsilon_0:=\lim_{n\rightarrow \infty} D_{\xi_n}\Theta(x_*, x_n)$ whose existence is guaranteed by 
the monotonicity of $\{D_{\xi_n}\Theta(x_*, x_n)\}$. Since the above inequality holds for all $l$, 
by taking $l\rightarrow \infty$ we obtain
\begin{align}\label{5.5.6.1}
\limsup_{n\rightarrow \infty} |\l\xi_n, x_n-x_*\r| \le c_0 \left(\varepsilon_0 - \varepsilon_0\right)=0.
\end{align}
Using (\ref{5.5.3}) with $\hat{x}$ replaced by $x_*$ we thus obtain
$\limsup_{n\rightarrow \infty} \Theta(x_n)\le \Theta(x_*)$.
Combining this with (\ref{5.5.2}) we therefore obtain $\lim_{n\rightarrow \infty} \Theta(x_n)=\Theta(x_*)$.
This together with (\ref{5.5.6.1}) then implies that $\lim_{n\rightarrow \infty} D_{\xi_n}\Theta(x_*, x_n)=0$.

Finally we prove $x_*=x^\dag$ under the additional condition $\N(F'(x^\dag))\subset \N(F'(x))$ for $x\in B_{3\rho}(x_0)\cap D(F)$.
We use (\ref{5.5.3}) with $\hat{x}$ replaced by $x^\dag$ to obtain
\begin{equation}\label{5.5.7}
D_{\xi_0}\Theta(x_n, x_0)\le D_{\xi_0}\Theta(x^\dag, x_0)+\l \xi_n-\xi_0, x_n-x^\dag\r.
\end{equation}
By using (\ref{5.5.1}), for any $\varepsilon>0$  we can find $l_0$ such that
$$
\left|\l \xi_n-\xi_{l_0}, x_n-x^\dag\r\right| <\frac{\varepsilon}{2}, \qquad n\ge l_0.
$$
We next consider $\l \xi_{l_0}-\xi_0, x_n-x^\dag\r$. According to the definition of $\xi_n$ we have
$\xi_j-\xi_{j-1}\in \R(F'(x_j)^*)$. Since $\X$ is reflexive and $\N(F'(x^\dag)) \subset \N(F'(x_j))$,
we have from (\ref{eq:28.1june}) that $\overline{\R(F'(x_j)^*)}\subset \overline{\R(F'(x^\dag)^*)}$.
Thus we can find $v_j\in \Y^*$ and $\beta_j \in \X^*$ such that
$$
\xi_j-\xi_{j-1}=F'(x^\dag)^* v_j +\beta_j \quad \mbox{and} \quad \|\beta_j\|
\le \frac{\varepsilon}{3 l_0 M}, \quad 1\le j\le l_0,
$$
where $M>0$ is a constant such that $\|x_n-x^\dag\|\le M$ for all $n$. Consequently
\begin{align*}
\left|\l \xi_{l_0}-\xi_0, x_n-x^\dag\r \right|&=\left|\sum_{j=1}^{l_0} \l \xi_j-\xi_{j-1}, x_n-x^\dag\r\right|\\
& =\left|\sum_{j=1}^{l_0} \left[\l v_j, F'(x^\dag) (x_n-x^\dag)\r +\l \beta_j, x_n-x^\dag\r \right]\right|\\
&\le \sum_{j=1}^{l_0} \left(\|v_j\| \|F'(x^\dag) (x_n-x^\dag)\| +\|\beta_j\| \|x_n-x^\dag\|\right)\\
&\le (1+\eta) \sum_{j=1}^{l_0} \|v_j\| \|F(x_n)-y\| +\frac{\varepsilon}{3}.
\end{align*}
Since $\|F(x_n)-y\|\rightarrow 0$ as $n\rightarrow \infty$, we can find $n_0\ge l_0$ such that
$$
|\l \xi_{l_0}-\xi_0, x_n-x^\dag\r |<\frac{\varepsilon}{2}, \qquad \forall n\ge n_0.
$$
Therefore $|\l \xi_n-\xi_0, x_n-x^\dag\r|<\varepsilon$ for all $n\ge n_0$. Since $\varepsilon>0$ is arbitrary,
we obtain $\lim_{n\rightarrow \infty} \l \xi_n-\xi_0, x_n-x^\dag\r=0$. By taking $n\rightarrow \infty$ in
(\ref{5.5.7}) and using $\Theta(x_n)\rightarrow \Theta(x_*)$ we obtain
$$
D_{\xi_0}\Theta(x_*, x_0)\le D_{\xi_0}\Theta(x^\dag, x_0).
$$
According to the definition of $x^\dag$ we must have $D_{\xi_0}\Theta(x_*, x_0)=D_{\xi_0}\Theta(x^\dag, x_0)$.
A direct application of Lemma \ref{lem existence and uniqueness of xdag} gives $x_*=x^\dag$. \hfill $\Box$
\end{proof}

As a byproduct, now we can use some estimates established in the proof of Theorem \ref{thm1}
to prove Lemma \ref{lem:A3}.

\vskip 0.15cm
\noindent
{\it Proof of Lemma \ref{lem:A3}.}
We assume that the minimization problem in (\ref{eq:method}) has two minimizers $x_n$ and $\hat{x}_n$.
Then it follows that
\begin{align*}
0 &= \frac{1}{r} \|F(\hat{x}_n)-y\|^r + \alpha_n D_{\xi_{n-1}} \Theta(\hat{x}_n,x_{n-1})
- \frac{1}{r} \|F(x_n)-y\|^r \\
& \quad \, - \alpha_n D_{\xi_{n-1}} \Theta(x_n,x_{n-1})\\
& = \Delta_r\left(F(\hat{x}_n)-y, F(x_n)-y\right)+\l J_r(F(x_n)-y), F(\hat{x}_n)-F(x_n)\r\\
&\quad+ \alpha_n\left(\Theta(\hat{x}_n)-\Theta(x_n)-\l \xi_{n-1}, \hat{x}_n-x_n\r\right).
\end{align*}
With the help of the definition of $\xi_n$ we can write
\begin{align*}
&\Theta(\hat{x}_n)-\Theta(x_n)-\l \xi_{n-1}, \hat{x}_n- x_n \r\\
& \quad  = \Theta(\hat{x}_n) - \Theta(x_n) - \l \xi_n, \hat{x}_n- x_n\r + \l \xi_n-\xi_{n-1}, \hat{x}_n- x_n\r\\
& \quad = D_{\xi_n} \Theta(\hat{x}_n, x_n) -\frac{1}{\a_n}\l J_r(F(x_n)-y), F'(x_n)(\hat{x}_n- x_n)\r.
\end{align*}
Therefore
\begin{align*}
0 &=\Delta_r\left(F(\hat{x}_n)-y,F(x_n)-y\right) +\a_n D_{\xi_n} \Theta(\hat{x}_n, x_n) \\
&\quad+ \l J_r(F(x_n)-y), F(\hat{x}_n)-F(x_n)-F'(x_n)(\hat{x}_n-x_n)\r.
\end{align*}
Since $x_n, \hat{x}_n\in B_{3\rho}(x_0)$ as shown in the proof of Theorem \ref{thm1}, we may use
Assumption \ref{A3} and the Young's inequality to obtain
\begin{align*}
0 &\geq  \Delta_r\left(F(\hat{x}_n)-y,F(x_n)-y\right) +\a_n D_{\xi_n} \Theta (\hat{x}_n, x_n) \\
& \quad \, - C_0 \|F(x_n)-y\|^{r-1} \left[D_{\xi_n} \Theta(\hat{x}_n, x_n)\right]^{1-\kappa}
\left[\Delta_r (F(\hat{x}_n)-y,F(x_n)-y)\right]^\kappa\\
&\ge \a_n D_{\xi_n} \Theta(\hat{x}_n, x_n) - (1-\kappa) \kappa^{\frac{\kappa}{1-\kappa}} C_0^{\frac{1}{1-\kappa}}
\|F(x_n)-y\|^{\frac{r-1}{1-\kappa}} D_{\xi_n} \Theta(\hat{x}_n,x_n).
\end{align*}
Recall that in the proof of Theorem \ref{thm1} we have established
$$
\|F(x_n)-y\|^r \le \frac{1}{1-\eta} s_n^{-1}  D_{\xi_0} \Theta(x^\dag, x_0) \quad \mbox{ with } s_n:=\sum_{j=1}^n \a_j^{-1}.
$$
Since $s_n^{-1} \le \min\{\a_1, \a_n\}$ and $\kappa\ge 1/r$, we therefore obtain
\begin{align*}
0 &\geq \left(1-\bar{C}_0^{\frac{1}{1-\kappa}} D_{\xi_0} \Theta(x^\dag, x_0)^{\frac{r-1}{r(1-\kappa)}} \right)
\alpha_n D_{\xi_n} \Theta(\hat{x}_n, x_n)
\end{align*}
with $\bar{C}_0:=C_0 \kappa^\kappa (1-\kappa)^{1-\kappa} (1-\eta)^{\frac{1-r}{r}}
\a_1^{\kappa-\frac{1}{r}}$. Thus we may use the second condition in (\ref{21.11june}) to conclude
that $D_{\xi_n} \Theta(\hat{x}_n, x_n)=0$ and hence $\hat{x}_n = x_n$. \hfill $\Box$

\subsection{\bf Justification of the method}

In this subsection we show that the method is well-defined, in particular we prove that,
when the data contains noise, the discrepancy principle (\ref{DP}) terminates
the iteration in finite steps, i.e. $n_\d<\infty$.

\begin{lemma}\label{lem stop}
Let $\X$ be reflexive and $\Y$ be uniformly smooth, let $\Theta$ satisfy Assumption \ref{A0}, and let $F$ satisfy
Assumption \ref{A1}. Let $1<r<\infty$ and $\tau>(1+\eta)/(1-\eta)$, and let $\{\a_n\}$ be such that
$\sum_{n=1}^\infty \a_n^{-1} =\infty$. Assume that (\ref{21.1june}) holds.
Then the discrepancy principle~\eqref{DP} terminates the iteration after $n_{\delta}<\infty$ steps.
 If $n_\d\ge 2$, then for $1\le n<n_\d$ there hold
\begin{align}
D_{\xi_{n}^{\d}} \Theta(\hat{x}, x_n^\d) &\leq D_{\xi_{n-1}^\d} \Theta(\hat{x}, x_{n-1}^\d), \label{eq decrease2}\\
\frac{1}{\alpha_n}\|F(x_n^\d)-y^\d\|^r
& \leq C_1 \left(D_{\xi_{n-1}^\d} \Theta(\hat{x}, x_{n-1}^\d)-D_{\xi_{n}^\d} \Theta(\hat{x},x_n^\d)\right).
\label{eq decrease22}
\end{align}
If, in addition, $\a_n \le c_0 \a_{n+1}$ for all $n$ with some constant $c_0>0$ and
\begin{equation}\label{eq:20.5june}
D_{\xi_0}\Theta(x^\dag, x_0) \le \frac{\tau^r-1}{\tau^r-1+c_0} \varphi (\rho),
\end{equation}
then there holds
\begin{align}\label{eq ndelta}
D_{\xi_{n_\d}^\d} \Theta(\hat{x}, x_{n_\d}^\d)\leq D_{\xi_{n_\d-1}^\d} \Theta(\hat{x}, x_{n_\d-1}^\d)
+ (1+\eta) \tau^{r-1} \frac{\d^r}{\alpha_{n_\d}},
\end{align}
where $\hat{x}$ denotes any solution of (\ref{1.1}) in $B_{3\rho}(x_0)\cap D(\Theta)$ and $C_1:= \tau/[(1-\eta)\tau-1-\eta]$.

\end{lemma}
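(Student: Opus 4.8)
The plan is to transcribe the noise-free argument of Theorem~\ref{thm1}, tracking the noise level $\d$ in every estimate. The starting point is a one-step inequality. Applying the identity (\ref{4.3.1}) to the pairs $(x_{n-1}^\d,\xi_{n-1}^\d)$, $(x_n^\d,\xi_n^\d)$ with test element $\hat{x}$, discarding the nonpositive term $-D_{\xi_{n-1}^\d}\Theta(x_n^\d,x_{n-1}^\d)$, and inserting the definition of $\xi_n^\d$ gives
\begin{align*}
D_{\xi_n^\d}\Theta(\hat{x},x_n^\d)-D_{\xi_{n-1}^\d}\Theta(\hat{x},x_{n-1}^\d)
\le \frac{1}{\a_n}\l J_r(F(x_n^\d)-y^\d),\ F'(x_n^\d)(\hat{x}-x_n^\d)\r.
\end{align*}
I would then decompose, using $F(\hat{x})=y$,
\begin{align*}
F'(x_n^\d)(\hat{x}-x_n^\d)=(y-y^\d)-(F(x_n^\d)-y^\d)-\big[F(\hat{x})-F(x_n^\d)-F'(x_n^\d)(\hat{x}-x_n^\d)\big],
\end{align*}
and use the defining properties of $J_r$, the bound $\|y-y^\d\|\le\d$, Assumption~\ref{A1}~(d), and $\|F(\hat{x})-F(x_n^\d)\|\le\d+\|F(x_n^\d)-y^\d\|$ to arrive at
\begin{align*}
D_{\xi_n^\d}\Theta(\hat{x},x_n^\d)-D_{\xi_{n-1}^\d}\Theta(\hat{x},x_{n-1}^\d)
\le \frac{1}{\a_n}\Big[-(1-\eta)\|F(x_n^\d)-y^\d\|^r+(1+\eta)\d\,\|F(x_n^\d)-y^\d\|^{r-1}\Big],
\end{align*}
valid whenever $x_n^\d\in B_{3\rho}(x_0)$, so that Assumption~\ref{A1}~(d) applies to the pair $\hat{x},x_n^\d$.

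Everything then reduces to verifying the ball condition $x_n^\d\in B_{3\rho}(x_0)$, which I would prove by induction. For $n<n_\d$ the discrepancy principle is active, $\|F(x_n^\d)-y^\d\|>\tau\d$, so testing the minimality of $x_n^\d$ against $x^\dag$ (using $\|F(x^\dag)-y^\d\|=\|y-y^\d\|\le\d$) and discarding $\d^r-\|F(x_n^\d)-y^\d\|^r<0$ yields
\begin{align*}
D_{\xi_{n-1}^\d}\Theta(x_n^\d,x_{n-1}^\d)\le D_{\xi_{n-1}^\d}\Theta(x^\dag,x_{n-1}^\d)\le D_{\xi_0}\Theta(x^\dag,x_0)\le\varphi(\rho),
\end{align*}
the second inequality being the inductively known monotonicity with $\hat{x}=x^\dag$. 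Assumption~\ref{A0} then forces $\|x_n^\d-x_{n-1}^\d\|\le\rho$ and hence $x_n^\d\in B_{3\rho}(x_0)$. Substituting $\d<\|F(x_n^\d)-y^\d\|/\tau$ into the one-step inequality produces, for $n<n_\d$, the monotonicity (\ref{eq decrease2}) (the bracket is $\le0$ precisely because $\tau>(1+\eta)/(1-\eta)$) and, after rearranging, (\ref{eq decrease22}) with exactly $C_1=\tau/[(1-\eta)\tau-(1+\eta)]$.

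Finite termination follows by summation: were $n_\d=\infty$, summing (\ref{eq decrease22}) and invoking the monotonicity (\ref{mono}) of $\|F(x_n^\d)-y^\d\|$ would give $\|F(x_N^\d)-y^\d\|^r\sum_{n=1}^N\a_n^{-1}\le C_1\,D_{\xi_0}\Theta(\hat{x},x_0)$ for all $N$, which contradicts $\|F(x_N^\d)-y^\d\|>\tau\d$ since $\sum_n\a_n^{-1}=\infty$. Finally, (\ref{eq ndelta}) comes from the one-step inequality at $n=n_\d$, where the discrepancy principle no longer holds: now $\|F(x_{n_\d}^\d)-y^\d\|\le\tau\d$, so I drop the negative term and estimate $(1+\eta)\d\,\|F(x_{n_\d}^\d)-y^\d\|^{r-1}\le(1+\eta)\tau^{r-1}\d^r$.

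The main obstacle is the ball condition at the terminal index, $x_{n_\d}^\d\in B_{3\rho}(x_0)$, since this is the one place the discrepancy principle cannot absorb the $\d^r$ term; this is exactly what forces the sharper closeness hypothesis (\ref{eq:20.5june}). Here I would use the minimality at the penultimate step together with $\|F(x_{n_\d-1}^\d)-y^\d\|>\tau\d$ to get $(\tau^r-1)\d^r< r\a_{n_\d-1}D_{\xi_0}\Theta(x^\dag,x_0)$, whence $\a_{n_\d-1}\le c_0\a_{n_\d}$ gives $\d^r/(r\a_{n_\d})<\tfrac{c_0}{\tau^r-1}D_{\xi_0}\Theta(x^\dag,x_0)$. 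Combining this with the terminal minimality bound $D_{\xi_{n_\d-1}^\d}\Theta(x_{n_\d}^\d,x_{n_\d-1}^\d)\le \d^r/(r\a_{n_\d})+D_{\xi_0}\Theta(x^\dag,x_0)$ and (\ref{eq:20.5june}) in the form $D_{\xi_0}\Theta(x^\dag,x_0)\le\tfrac{\tau^r-1}{\tau^r-1+c_0}\varphi(\rho)$, the two contributions sum to at most $\varphi(\rho)$, so Assumption~\ref{A0} places $x_{n_\d}^\d$ in $B_{3\rho}(x_0)$; this is precisely the role of the factor $(\tau^r-1)/(\tau^r-1+c_0)$.
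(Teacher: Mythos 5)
Your proposal is correct and follows essentially the same route as the paper's own proof: the same one-step estimate obtained from (\ref{4.3.1}), the definition of $\xi_n^\d$, the properties of $J_r$ and Assumption \ref{A1} (d); the same induction in which the inductively known monotonicity with $\hat{x}=x^\dag$ plus Assumption \ref{A0} and (\ref{21.1june}) yields the ball condition $x_n^\d\in B_{3\rho}(x_0)$ at the next step; the same telescoping/summation argument for finite termination; and the same treatment of the terminal index, using the minimality at step $n_\d$, the lower bound on $\a_{n_\d-1}$ from the penultimate step, the ratio condition $\a_{n_\d-1}\le c_0\a_{n_\d}$, and the sharper closeness condition (\ref{eq:20.5june}) to place $x_{n_\d}^\d$ in $B_{3\rho}(x_0)$. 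The constants you obtain, $C_1=\tau/[(1-\eta)\tau-1-\eta]$ and $(1+\eta)\tau^{r-1}\d^r/\a_{n_\d}$, coincide with the paper's.
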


\begin{proof}
To prove the first part, we first show by induction that
\begin{equation}\label{eq:20.4june}
x_n^\d\in B_{2\rho}(x_0) \quad \mbox{and} \quad D_{\xi_n^\d}\Theta(x^\dag, x_n^\d) \le D_{\xi_0}\Theta(x^\dag, x_0),
\quad 0\le n<n_\d.
\end{equation}
This is trivial for $n=0$. Next we assume that (\ref{eq:20.4june}) is true for $n=m-1$ for some $m<n_\d$ and
show that (\ref{eq:20.4june}) is also true for $n=m$. By the minimizing property of $x_m^\d$ and
the induction hypothesis we have
\begin{align}\label{eq7.21}
\frac{1}{r} \|F(x_m^\d)-y^\d\|^r +\a_m D_{\xi_{m-1}^\d} \Theta(x_m^\d, x_{m-1}^\d)
& \le \frac{1}{r} \d^r + \a_m D_{\xi_{m-1}^\d} \Theta(x^\dag, x_{m-1}^\d) \nonumber\\
& \le \frac{1}{r} \d^r + \a_m D_{\xi_0}\Theta(x^\dag, x_0).
\end{align}
Since $\|F(x_m^\d)-y^\d\|>\tau \d$, we can obtain
\begin{align*}
\frac{\tau^r}{r} \d^r +\a_m D_{\xi_{m-1}^\d} \Theta(x_m^\d, x_{m-1}^\d)
 \le \frac{1}{r} \d^r + \a_m D_{\xi_0}\Theta(x^\dag, x_0).
\end{align*}
Because $\tau>1$, this implies that
\begin{equation}\label{eq:20.6june}
\a_m\ge \frac{(\tau^r-1) \d^r}{r D_{\xi_0} \Theta(x^\dag, x_0)} \quad \mbox{and} \quad
D_{\xi_{m-1}^\d} \Theta(x_m^\d, x_{m-1}^\d) \le  D_{\xi_0}\Theta(x^\dag, x_0).
\end{equation}
By Assumption \ref{A0} and the condition (\ref{21.1june}), we can derive that
$\|x_m^\d-x_{m-1}^\d\| \le \rho$. In view of the induction hypothesis we also have $\|x_{m-1}^\d-x_0\|
\le 2\rho$. Thus $x_m^\d\in B_{3\rho}(x_0)$.

We are now able to use Assumption \ref{A1} (d) and the similar argument for deriving \eqref{5.4.1} to obtain that
\begin{align}
D_{\xi_m^\d} & \Theta (\hat{x},x_m^\d)- D_{\xi_{m-1}^\d} \Theta(\hat{x},x_{m-1}^\d) \nonumber\\
&\leq \langle \xi_m^\d-\xi_{m-1}^\d, x_m^\d-\hat{x}\rangle
 = -\frac{1}{\alpha_m} \langle J_r(F(x_m^\d)-y^\d), F'(x_m^\d)(x_m^\d-\hat{x}) \rangle \nonumber\\
&\leq -\frac{1}{\alpha_m}\|F(x_m^{\delta})-y^{\delta}\|^{r}
 +\frac{1}{\alpha_m} \|F(x_m^\d)-y^\d\|^{r-1} \left(\d + \eta \|F(x_m^\d) -y\|\right) \nonumber\\
& \leq -\frac{1-\eta}{\alpha_m} \|F(x_m^\d)-y^\d\|^{r}
 +\frac{1+\eta}{\a_m} \|F(x_m^\d)-y^\d\|^{r-1} \d. \label{eq:19.5june}
\end{align}
Using again $\|F(x_m^\d)-y^\d\|>\tau\d$, we can conclude that
\begin{align}\label{20.6june}
D_{\xi_m^\d} \Theta (\hat{x}, x_m^\d)- D_{\xi_{m-1}^\d} \Theta(\hat{x}, x_{m-1}^\d)
&\leq -\frac{1}{\alpha_m} \left(1-\eta-\frac{1+\eta}{\tau}\right) \|F(x_m^\d)-y^\d\|^r.
\end{align}
Since $\tau>(1+\eta)/(1-\eta)$, we obtain
$$
D_{\xi_m^\d}\Theta(\hat{x}, x_m^\d) \le D_{\xi_{m-1}^\d}\Theta(\hat{x}, x_{m-1}^\d).
$$
In view of this inequality with $\hat{x}=x^\dag$ and the induction hypothesis, we obtain the second result
in (\ref{eq:20.4june}) with $n=m$. By using again Assumption \ref{A0} and (\ref{21.1june})
we have $\|x_m^\d-x^\dag\|\le \rho$ and $\|x^\dag-x_0\|\le \rho$ which imply that $x_m^\d\in B_{2\rho}(x_0)$.
We therefore complete the proof of (\ref{eq:20.4june}). As a direct consequence, we can see that
(\ref{20.6june}) holds for all $1\le m<n_\d$ which implies (\ref{eq decrease2}) and (\ref{eq decrease22}).

In view of~\eqref{eq decrease22} and the monotonicity (\ref{mono}) of $\|F(x_n^\d)-y^\d\|$
with respect to $n$, it follows that
\begin{align*}
\|F(x_n^\d)-y^\d\|^r \sum_{j=1}^n\frac{1}{\alpha_j}
 \leq \sum_{j=1}^n\frac{1}{\alpha_j} \|F(x_j^\d)-y^\d\|^r
\leq \frac{\tau}{(1-\eta)\tau-1-\eta} D_{\xi_0} \Theta(\hat{x}, x_0).
\end{align*}
Since $\|F(x_n^\d)-y^\d\|>\tau\d$ for $1\leq n<n_\d$ and $\sum_{j=1}^n\alpha_j^{-1}\rightarrow\infty$
as $n\rightarrow\infty$, we can conclude that $n_\d$ is a finite integer.

Finally we prove the second part, i.e. the inequality (\ref{eq ndelta}). Since (\ref{eq7.21}) is true for $m=n_\d$, we have
$$
D_{\xi_{n_\d-1}^\d} \Theta(x_{n_\d}^\d, x_{n_\d-1}^\d) \le \frac{\d^r}{r \a_{n_\d}}
+D_{\xi_0} \Theta(x^\dag, x_0).
$$
Recall from (\ref{eq:20.6june}) that $\a_{n_\d-1} \ge (\tau^r-1) \d^r /(r D_{\xi_0}\Theta(x^\dag, x_0))$.
Since $\a_{n_\d-1}\le c_0 \a_{n_\d}$, we can derive that
$$
D_{\xi_{n_\d-1}^\d} \Theta(x_{n_\d}^\d, x_{n_\d-1}^\d) \le
\frac{\tau^r-1+c_0}{\tau^r-1} D_{\xi_0} \Theta(x^\dag, x_0).
$$
It then follows from Assumption \ref{A0} and (\ref{eq:20.5june}) that $\|x_{n_\d}^\d-x_{n_\d-1}^\d\|\le \rho$.
Since $x_{n_\d-1}^\d\in B_{2\rho}(x_0)$ we obtain $x_{n_\d}^\d\in B_{3\rho}(x_0)$. Thus we can employ
Assumption \ref{A1} (d) to conclude that (\ref{eq:19.5june}) is also true for $m=n_\d$.
By setting $m=n_\d$ in (\ref{eq:19.5june}) and using $\|F(x_{n_\d}^\d)-y^\d\|\leq \tau\d$,
we can obtain \eqref{eq ndelta}. \hfill $\Box$
\end{proof}

As a byproduct of the proof of Lemma \ref{lem stop}, we have the following result which will be used to show
$\lim_{\d\rightarrow 0} \Theta(x_{n_\d}^\d) =\Theta(x_*)$ in the proof of Theorem \ref{th2}.

\begin{lemma}\label{lem 19june}
Let all the conditions in Lemma \ref{lem stop} hold, and let $\hat{x}$ be any solution
of (\ref{1.1}) in $B_{3\rho}(x_0)\cap D(\Theta)$. Then for all $0\le l< n_\d$ there holds
\begin{align}\label{eq:19.9june}
\left|\l \xi_{n_\d}^\d-\xi_l^\d, \hat{x}-x_{n_\d}^\d\r \right| \le C_2 \frac{\d^r}{\a_{n_\d}}
+ C_3 D_{\xi_l^\d} \Theta (\hat{x}, x_l^\d),
\end{align}
where $C_2:= 3(1+\eta) \tau^{r-1} (1+\tau)$ and $C_3:=3(1+\eta) (1+\tau)/[(1-\eta) \tau-1-\eta]$.
\end{lemma}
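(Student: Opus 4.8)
The plan is to expand $\xi_{n_\d}^\d-\xi_l^\d$ as a telescoping sum through the update formula in (\ref{eq method}), reduce the estimate to a bound on $\|F'(x_n^\d)(\hat{x}-x_{n_\d}^\d)\|$, and then sum using the decrease inequality (\ref{eq decrease22}). Concretely, I would first write
\[
\xi_{n_\d}^\d-\xi_l^\d=\sum_{n=l+1}^{n_\d}\left(\xi_n^\d-\xi_{n-1}^\d\right)
=-\sum_{n=l+1}^{n_\d}\frac{1}{\a_n}F'(x_n^\d)^* J_r(F(x_n^\d)-y^\d),
\]
so that, using $\|J_r(F(x_n^\d)-y^\d)\|=\|F(x_n^\d)-y^\d\|^{r-1}$ and the adjoint identity,
\[
\left|\l\xi_{n_\d}^\d-\xi_l^\d,\hat{x}-x_{n_\d}^\d\r\right|
\le\sum_{n=l+1}^{n_\d}\frac{1}{\a_n}\|F(x_n^\d)-y^\d\|^{r-1}\,\|F'(x_n^\d)(\hat{x}-x_{n_\d}^\d)\|.
\]

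The key step is the noisy analogue of (\ref{eq:20.1june}). All the points $x_n^\d$ for $l+1\le n\le n_\d$ lie in $B_{3\rho}(x_0)$ (from (\ref{eq:20.4june}) for $n<n_\d$ and from the last part of the proof of Lemma \ref{lem stop} for $n=n_\d$), and $\hat{x}\in B_{3\rho}(x_0)$ by hypothesis, so Assumption \ref{A1} (d) is available for every pair involved. Splitting $\hat{x}-x_{n_\d}^\d=(\hat{x}-x_n^\d)+(x_n^\d-x_{n_\d}^\d)$, applying Assumption \ref{A1} (d) to each piece together with $F(\hat{x})=y$, $\|y-y^\d\|\le\d$, and the monotonicity (\ref{mono}) of $\|F(x_n^\d)-y^\d\|$ (which gives $\|F(x_{n_\d}^\d)-y^\d\|\le\|F(x_n^\d)-y^\d\|$), I would obtain
\[
\|F'(x_n^\d)(\hat{x}-x_{n_\d}^\d)\|\le 3(1+\eta)\|F(x_n^\d)-y^\d\|+(1+\eta)\d.
\]

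Substituting this bound and splitting the resulting double sum into the indices $l+1\le n\le n_\d-1$ and the single index $n=n_\d$ finishes the argument. For the body of the sum, the $\|F(x_n^\d)-y^\d\|^r$ contributions are controlled by (\ref{eq decrease22}), and since (\ref{eq decrease2}) makes $\{D_{\xi_n^\d}\Theta(\hat{x},x_n^\d)\}$ monotonically decreasing and nonnegative, the telescoped right-hand side is bounded by $C_1 D_{\xi_l^\d}\Theta(\hat{x},x_l^\d)$; the cross terms carrying a factor $\d$ are absorbed into the same sum via $\d<\tau^{-1}\|F(x_n^\d)-y^\d\|$, valid for $n<n_\d$ by (\ref{DP}). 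For the remaining index $n=n_\d$, where instead $\|F(x_{n_\d}^\d)-y^\d\|\le\tau\d$, both contributions are estimated directly by multiples of $\d^r/\a_{n_\d}$. Collecting the constants then yields (\ref{eq:19.9june}) with $C_2$ and $C_3$ as stated.

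The main obstacle is the constant bookkeeping together with the separate treatment of the terminal index $n=n_\d$: the decrease inequality (\ref{eq decrease22}) and the lower bound $\|F(x_n^\d)-y^\d\|>\tau\d$ are only claimed for $n<n_\d$, so the final summand cannot be handled by telescoping and must instead be controlled crudely through the discrepancy bound $\|F(x_{n_\d}^\d)-y^\d\|\le\tau\d$, which is precisely what produces the $\d^r/\a_{n_\d}$ term. Everything else is routine manipulation, and the slack in the constants (one may verify $3\tau+1\le 3(1+\tau)$ in both places) leaves room for the clean choices of $C_2$ and $C_3$.
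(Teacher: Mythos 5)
Your proposal is correct and follows essentially the same route as the paper: telescoping the dual updates, bounding $\|F'(x_n^\d)(\hat{x}-x_{n_\d}^\d)\|$ via Assumption \ref{A1} (d) and the monotonicity (\ref{mono}), then splitting off the terminal index $n=n_\d$ (handled by $\|F(x_{n_\d}^\d)-y^\d\|\le\tau\d$) from the body of the sum (handled by (\ref{eq decrease22}) and $\d<\tau^{-1}\|F(x_n^\d)-y^\d\|$). Your intermediate bound $3(1+\eta)\|F(x_n^\d)-y^\d\|+(1+\eta)\d$ is slightly sharper than the paper's $3(1+\eta)\left(\|F(x_n^\d)-y^\d\|+\d\right)$ but implies it, so the stated constants $C_2$ and $C_3$ are recovered exactly as you observe.
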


\begin{proof}
By the definition of $\xi_n^\d$ and the property of the duality mapping $J_r$, we can obtain, using the
similar argument for deriving (\ref{eq:19.6june}), that
\begin{align*}
\left|\l \xi_{n_\d}^\d-\xi_l^\d, \hat{x}-x_{n_\d}^\d\r \right|
\le \sum_{n=l+1}^{n_\d} \frac{1}{\a_n} \|F(x_n^\d)-y^\d\|^{r-1}\|F'(x_n^\d) (\hat{x}-x_{n_\d}^\d)\|.
\end{align*}
With the help of Assumption \ref{A1} (d)  and the monotonicity (\ref{mono}) of $\|F(x_n^\d)-y^\d\|$ with respect to $n$,
similar to the derivation of (\ref{eq:20.1june}) we have for $n\le n_\d$ that
$$
\|F'(x_n^\d)(\hat{x}-x_{n_\d}^\d)\| \le 3(1+\eta) \left(\|F(x_n^\d)-y^\d\| +\d\right).
$$
Therefore
\begin{align*}
\left|\l \xi_{n_\d}^\d-\xi_l^\d, \hat{x}-x_{n_\d}^\d\r \right|
\le 3(1+\eta) \sum_{n=l+1}^{n_\d} \frac{1}{\a_n} \|F(x_n^\d)-y^\d\|^{r-1} \left(\|F(x_n^\d)-y^\d\| +\d\right).
\end{align*}
Since $\|F(x_{n_\d}^\d)-y^\d\| \le \tau \d$ and $\|F(x_n^\d)-y^\d\| >\tau \d$ for $0\le n <n_\d$, we thus obtain
\begin{align} \label{eq:19.7june}
& \left|\l \xi_{n_\d}^\d-\xi_l^\d, \hat{x}-x_{n_\d}^\d\r \right| \nonumber\\
& \quad \le 3(1+\eta)\tau^{r-1} (1+\tau) \frac{\d^r}{\a_{n_\d}}
+ \frac{3(1+\eta)(1+\tau)}{\tau} \sum_{n=l+1}^{n_\d-1} \frac{1}{\a_n} \|F(x_n^\d)-y^\d\|^r.
\end{align}
In view of (\ref{eq decrease22}) in Lemma \ref{lem stop}, we can see that
$$
\sum_{n=l+1}^{n_\d-1} \frac{1}{\a_n} \|F(x_n^\d)-y^\d\|^r
\le \frac{\tau}{(1-\eta)\tau-1-\eta} D_{\xi_l^\d}\Theta (\hat{x}, x_l^\d).
$$
Combining this inequality with (\ref{eq:19.7june}) gives the desired estimate. \hfill $\Box$
\end{proof}

\subsection{\bf Stability}

We will prove some stability results on the method which connect $\{x_n^\d\}$ with $\{x_n\}$.
These results enable us to use Theorem \ref{thm1} to complete the proof of Theorem \ref{th2}.

\begin{lemma}\label{lem xdelta}
Let $\X$ be reflexive and $\Y$ be uniformly smooth, let $\Theta$ satisfy Assumption \ref{A0},
and let $F$ satisfy Assumptions \ref{A1} and \ref{A2}. Then for each fixed $n$ there hold
\begin{align}\label{eq 3convergence}
x_n^\d\rightarrow x_n,\quad \Theta(x_n^\d)\rightarrow \Theta(x_n)\quad \mbox{and} \quad \xi_n^\d\rightarrow \xi_n
\end{align}
as $y^\d\rightarrow y$.
\end{lemma}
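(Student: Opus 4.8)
The plan is to argue by induction on $n$, the case $n=0$ being trivial since $x_0^\d=x_0$, $\xi_0^\d=\xi_0$ and $\Theta(x_0^\d)=\Theta(x_0)$. Assuming the three convergences for $n-1$, I would first extract a uniform bound from the minimality of $x_n^\d$. Comparing the value of the functional in (\ref{eq method}) at $x_n^\d$ with its value at the noise-free iterate $x_n$ gives
\begin{align*}
\frac{1}{r}\|F(x_n^\d)-y^\d\|^r + \a_n D_{\xi_{n-1}^\d}\Theta(x_n^\d, x_{n-1}^\d)
\le \frac{1}{r}\|F(x_n)-y^\d\|^r + \a_n D_{\xi_{n-1}^\d}\Theta(x_n, x_{n-1}^\d),
\end{align*}
and, using $y^\d\to y$ together with the induction hypothesis $x_{n-1}^\d\to x_{n-1}$, $\Theta(x_{n-1}^\d)\to\Theta(x_{n-1})$ and $\xi_{n-1}^\d\to\xi_{n-1}$, the right-hand side converges to the noise-free functional evaluated at $x_n$. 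Hence the left-hand side stays bounded, and by Assumption \ref{A0} (the bound $D_{\xi_{n-1}^\d}\Theta(x_n^\d,x_{n-1}^\d)\ge \varphi(\|x_n^\d-x_{n-1}^\d\|)$) together with the boundedness of $\{x_{n-1}^\d\}$, the family $\{x_n^\d\}$ is bounded; consequently $\{F(x_n^\d)\}$ is bounded in the reflexive space $\Y$.

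Next, given any sequence $\d\to 0$, I would pass to a subsequence so that $x_n^\d\rightharpoonup \tilde x$ in $\X$ and $F(x_n^\d)\rightharpoonup v$ in $\Y$; the weak closedness in Assumption \ref{A1}(a) then yields $\tilde x\in D(F)$ and $F(\tilde x)=v$. Splitting the functional into fidelity and Bregman parts and using the weak lower semicontinuity of $\|\cdot\|$ and of $\Theta$ along with the convergence of $y^\d$, $x_{n-1}^\d$, $\Theta(x_{n-1}^\d)$ and $\xi_{n-1}^\d$, I would obtain
$$
\frac{1}{r}\|F(\tilde x)-y\|^r + \a_n D_{\xi_{n-1}}\Theta(\tilde x, x_{n-1})
\le \liminf_{\d\to 0}\left(\frac{1}{r}\|F(x_n^\d)-y^\d\|^r + \a_n D_{\xi_{n-1}^\d}\Theta(x_n^\d, x_{n-1}^\d)\right).
$$
Combining this with the upper bound from the displayed inequality above shows that $\tilde x$ minimizes the noise-free functional (\ref{eq:method}); by the uniqueness granted by Assumption \ref{A2}, $\tilde x=x_n$. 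Since every subsequence admits a sub-subsequence with this property, the whole family satisfies $x_n^\d\rightharpoonup x_n$, and the objective values converge to the minimum.

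To upgrade weak to strong convergence I would use the standard observation that when a convergent sum of two $\liminf$-bounded terms attains equality, each term converges separately: thus $\|F(x_n^\d)-y^\d\|^r\to\|F(x_n)-y\|^r$ and $D_{\xi_{n-1}^\d}\Theta(x_n^\d,x_{n-1}^\d)\to D_{\xi_{n-1}}\Theta(x_n,x_{n-1})$. Expanding the Bregman term and using $\Theta(x_{n-1}^\d)\to\Theta(x_{n-1})$, $\xi_{n-1}^\d\to\xi_{n-1}$ and $x_n^\d\rightharpoonup x_n$ then forces $\Theta(x_n^\d)\to\Theta(x_n)$. With $x_n^\d\rightharpoonup x_n$ and $\Theta(x_n^\d)\to\Theta(x_n)$, the Kadec property of Lemma \ref{lem:Kadec} delivers $x_n^\d\to x_n$ strongly. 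For the dual variables I would then use the update formula: $x_n^\d\to x_n$ gives $F(x_n^\d)-y^\d\to F(x_n)-y$ by continuity of $F$, hence $J_r(F(x_n^\d)-y^\d)\to J_r(F(x_n)-y)$ since $J_r$ is single-valued and continuous on the uniformly smooth $\Y$, while $F'(x_n^\d)^*\to F'(x_n)^*$ by the continuity of $x\mapsto F'(x)$ in Assumption \ref{A1}(c); together with $\xi_{n-1}^\d\to\xi_{n-1}$ this yields $\xi_n^\d\to\xi_n$.

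I expect the main obstacle to be the middle step: combining the weak lower semicontinuity of the several terms with the induction hypothesis to identify the weak limit as the \emph{unique} noise-free minimizer (where Assumption \ref{A2} is essential), and then extracting the separate convergence of $\Theta(x_n^\d)$. Because $\Theta$ is only weakly lower semicontinuous and not continuous, $\Theta(x_n^\d)\to\Theta(x_n)$ does \emph{not} follow from $x_n^\d\to x_n$ alone and must be obtained from the termwise convergence of the objective, which is precisely what feeds the Kadec argument.
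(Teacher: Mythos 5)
Your proposal is correct and follows essentially the same route as the paper's proof: induction, a minimality comparison to get boundedness and weak subsequential limits, identification of the weak limit as $x_n$ via weak closedness, weak lower semicontinuity and Assumption \ref{A2}, termwise convergence of the two parts of the objective to recover $\Theta(x_n^\d)\rightarrow\Theta(x_n)$, the Kadec property of Lemma \ref{lem:Kadec} for strong convergence, and continuity of $F'$, $J_r$ for the dual update. The only differences are cosmetic: you compare against $x_n$ rather than $x_{n-1}^\d$ in the boundedness step, and you invoke the ``sum of liminf-bounded terms'' observation directly where the paper proves the same fact by a short contradiction argument.
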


\begin{proof}
We show this result by induction. It is trivial when $n=0$ since $x_0^\d = x_0$ and $\xi_0^\d = \xi_0$.
In the following we assume that the result is proved for $n=m-1$ and show that the result holds also for $n=m$.

We will adapt the argument from \cite{EKN89}. Let $\{y^{\d_i}\}$ be a sequence of data satisfying
$\|y^{\d_i}-y\|\le \d_i$ with $\d_i\rightarrow 0$. By the minimizing property of $x_m^{\d_i}$ we have
\begin{align*}
\frac{1}{r}\|F(x_m^{\d_i})-y^{\d_i}\|^r
+ \alpha_m D_{\xi_{m-1}^{\d_i}} \Theta (x_m^{\d_i},x_{m-1}^{\d_i})
\leq \frac{1}{r}\|F(x_{m-1}^{\d_i})-y^{\d_i}\|^r.
\end{align*}
By the induction hypothesis, we can see that the right hand side of the above inequality
is uniformly bounded with respect to $i$. Therefore both $\{\|F(x_m^{\d_i})-y^{\d_i}\|\}$ and
$\{D_{\xi_{m-1}^{\d_i}}\Theta(x_m^{\d_i},x_{m-1}^{\d_i})\}$ are uniformly bounded
with respect to $i$. Consequently $\{F(x_m^{\d_i})\}$ is bounded in $\Y$ and $\{x_m^{\d_i}\}$
is bounded in $\X$; here we used the uniformly convexity of $\Theta$. Since both $\X$ and $\Y$
are reflexive, by taking a subsequence if necessary,
we may assume that $x_m^{\d_i}\rightharpoonup \bar{x}_m\in\X$ and
$F(x_m^{\d_i})\rightharpoonup \bar{y}_m\in \Y$ as $i\rightarrow\infty$. Since $F$ is weakly closed,
we have $\bar{x}_m \in D(F)$ and $F(\bar{x}_m)=\bar{y}_m$. In view of the weakly lower semi-continuity
of Banach space norm we have
\begin{align}\label{eq use2}
\|F(\bar{x}_m)-y\|\leq \liminf_{i\rightarrow\infty}\|F(x_m^{\d_i})-y^{\d_i}\|.
\end{align}
Moreover, by using $x_m^{\d_i}\rightharpoonup \bar{x}_m$, the weakly lower semi-continuity of $\Theta$,
and the induction hypothesis, we have
\begin{align}\label{eq use1}
\liminf_{i\rightarrow\infty} D_{\xi_{m-1}^{\d_i}} \Theta(x_m^{\d_i},x_{m-1}^{\d_i})
& = \liminf_{i\rightarrow\infty} \Theta(x_m^{\d_i}) - \Theta(x_{m-1}) - \langle\xi_{m-1},\bar{x}_m-x_{m-1}\rangle \nonumber\\
&\geq \Theta(\bar{x}_m) - \Theta(x_{m-1}) - \langle\xi_{m-1},\bar{x}_m-x_{m-1}\rangle \nonumber\\
&= D_{\xi_{m-1}}\Theta(\bar{x}_m,x_{m-1}).
\end{align}
The inequalities (\ref{eq use2}) and (\ref{eq use1}) together with the minimizing property
of $x_m^{\d_i}$ and the induction hypothesis imply
\begin{align*}
\frac{1}{r}  \|F(\bar{x}_m) & -y\|^r + \alpha_m D_{\xi_{m-1}} \Theta(\bar{x}_m,x_{m-1})\\
&\leq \liminf_{i\rightarrow\infty} \left\{\frac{1}{r}\|F(x_m^{\d_i})-y^{\d_i}\|^r
+ \alpha_m D_{\xi_{m-1}^{\d_i}} \Theta (x_m^{\d_i},x_{m-1}^{\d_i})\right\}\\
&\leq \limsup_{i\rightarrow\infty} \left\{\frac{1}{r}\|F(x_m^{\d_i})-y^{\d_i}\|^r
+ \alpha_m D_{\xi_{m-1}^{\d_i}} \Theta (x_m^{\d_i},x_{m-1}^{\d_i})\right\}\\
&\leq \limsup_{i\rightarrow\infty} \left\{\frac{1}{r}\|F(x_m)-y^{\d_i}\|^r
+ \alpha_m D_{\xi_{m-1}^{\d_i}} \Theta (x_m,x_{m-1}^{\d_i})\right\}\\
&= \frac{1}{r}\|F(x_m)-y\|^r + \alpha_m D_{\xi_{m-1}} \Theta(x_m,x_{m-1}).
\end{align*}
According to the definition of $x_m$ and Assumption \ref{A2}, we must have $\bar{x}_m = x_m$. Therefore
$x_m^{\d_i}\rightharpoonup x_m$, $F(x_m^{\d_i})\rightharpoonup F(x_m)$, and
\begin{align}\label{eq:18june}
\lim_{i\rightarrow\infty} & \left\{\frac{1}{r}\|F(x_m^{\d_i})-y^{\d_i}\|^r
+ \alpha_m D_{\xi_{m-1}^{\d_i}} \Theta (x_m^{\d_i},x_{m-1}^{\d_i})\right\} \nonumber\\
&\qquad \qquad = \frac{1}{r}\|F(x_m)-y\|^r + \alpha_m D_{\xi_{m-1}} \Theta(x_m,x_{m-1}).
\end{align}

Next we will show that
\begin{align}\label{eq lim}
\lim_{i\rightarrow\infty} D_{\xi_{m-1}^{\d_i}} \Theta(x_m^{\d_i},x_{m-1}^{\d_i}) = D_{\xi_{m-1}} \Theta(x_m,x_{m-1}).
\end{align}
Let
\begin{align*}
a: =  \limsup_{i\rightarrow\infty} D_{\xi_{m-1}^{\d_i}} \Theta(x_m^{\d_i},x_{m-1}^{\d_i})\quad \textrm{and} \quad
b: = D_{\xi_{m-1}} \Theta(x_m,x_{m-1}).
\end{align*}
In view of \eqref{eq use1}, it suffices to show $a\leq b$. Assume to the contrary that $a>b$. By taking a subsequence
if necessary, we may assume that
\begin{align*}
a= \lim_{i\rightarrow\infty} D_{\xi_{m-1}^{\d_i}} \Theta(x_m^{\d_i},x_{m-1}^{\d_i}).
\end{align*}
It then follows from (\ref{eq:18june}) that
\begin{align*}
\frac{1}{r} \lim_{i\rightarrow\infty}\|F(x_m^{\d_i})-y^{\d_i}\|^r
=\frac{1}{r} \|F(x_m)-y\|^r +\alpha_m (b-a) < \frac{1}{r}\|F(x_m)-y\|^r
\end{align*}
which is a contradiction to (\ref{eq use2}).  We therefore obtain (\ref{eq lim}).

By using the induction hypothesis and $x_m^{\d_i}\rightharpoonup x_m$, we obtain from (\ref{eq lim}) that
\begin{align*}
\lim_{i\rightarrow\infty} \Theta(x_m^{\d_i}) = \Theta(x_m).
\end{align*}
Since $x_m^{\d_i}\rightharpoonup x_m$ and since $\Theta$ has the Kadec property, see Lemma \ref{lem:Kadec},
we obtain that $x_m^{\d_i}\rightarrow x_m$ as $i\rightarrow\infty$.
Finally, from the definition of $\xi_m^{\d_i}$, the induction hypothesis, and the continuity
of the map $x\to F'(x)$, and the continuity of the duality mapping $J_r$, it follows that
$\xi_m^{\d_i}\rightarrow \xi_m$ as $i\rightarrow\infty$.

The above argument shows that for any sequence $\{y^{\d_i}\}$ converging to $y$, the sequence $\{x_m^{\d_i}\}$
always has a subsequence, still denoted as $x_m^{\d_i}$, such that $x_m^{\d_i}\rightarrow x_m$,
$\Theta(x_m^{\d_i}) \rightarrow \Theta(x_m)$ and $\xi_m^{\d_i} \rightarrow \xi_m$ as $i\rightarrow \infty$.
Therefore, we obtain (\ref{eq 3convergence}) with $n=m$ as $y^\d \rightarrow y$. The proof is complete. \hfill $\Box$
\end{proof}

\subsection{\bf Proof of Theorem \ref{th2}}

Since other parts have been proved in Lemma \ref{lem stop}, it remains only to show the convergence
result (\ref{eq:convergence}), where $x_*$ is the limit of $\{x_n\}$ which exists by Theorem \ref{thm1}.

Assume first that $\{y^{\d_i}\}$ is a sequence satisfying $\|y^{\d_i}-y\|\leq \d_i$ with $\d_i\rightarrow 0$
such that $n_{\d_i}\rightarrow n_0$ as $i\rightarrow\infty$ for some integer $n_0$. We may assume $n_{\d_i} =n_0$
for all $i$. From the definition of $n_{\d_i} = n_0$, we have
\begin{align*}
\|F(x_{n_0}^{\d_i}) - y^{\d_i}\|\leq\tau\d_i.
\end{align*}
Since Lemma \ref{lem xdelta} implies $x_{n_0}^{\d_i}\rightarrow x_{n_0}$, by letting $i\rightarrow \infty$
we have $F(x_{n_0}) = y$. This together with the definition of $x_n$ implies that $x_n=x_{n_0}$ for
all $n\geq n_0$. Since Theorem \ref{thm1} implies $x_n\rightarrow x_*$ as $n\rightarrow\infty$,
we must have $x_{n_0} = x_*$. Consequently, we have from Lemma \ref{lem xdelta} that
$x_{n_{\d_i}}^{\d_i} \rightarrow x_*$, $\Theta(x_{n_{\d_i}}^{\d_i})=\Theta(x_{n_0}^{\d_i})\rightarrow
\Theta(x_{n_0})=\Theta(x_*)$ and
\begin{align*}
D_{\xi_{n_{\d_i}}^{\d_i}} \Theta(x_*, x_{n_{\d_i}}^{\d_i})
= D_{\xi_{n_0}^{\d_i}} \Theta(x_{n_0},x_{n_0}^{\d_i})\rightarrow 0
\end{align*}
as $i\rightarrow \infty$.

Assume next that $\{y^{\d_i}\}$ is a sequence satisfying $\|y^{\d_i}-y\|\leq \d_i$ with $\d_i\rightarrow 0$
such that $n_i:=n_{\d_i} \rightarrow\infty$ as $i\rightarrow \infty$. We first show that
\begin{align}\label{eq:19june}
D_{\xi_{n_i-2}^{\d_i}} \Theta(x_*, x_{n_i-2}^{\d_i})\rightarrow 0\qquad\textrm{as } i\rightarrow\infty.
\end{align}
Let $\epsilon>0$ be an arbitrary number. Since Theorem \ref{thm1} implies $D_{\xi_n} \Theta(x_*,x_n)\rightarrow 0$
as $n\rightarrow \infty$, there exists an integer $n(\epsilon)$ such that
$D_{\xi_{n(\epsilon)}}\Theta(x_*, x_{n(\epsilon)})<\epsilon/2$. On the other hand,
since Lemma \ref{lem xdelta} implies $x_{n(\epsilon)}^{\d_i}\rightarrow x_{n(\epsilon)}$,
$\Theta(x_{n(\epsilon)}^{\d_i}) \rightarrow \Theta(x_{n(\epsilon)})$ and
$\xi_{n(\epsilon)}^{\d_i}\rightarrow\xi_{n(\epsilon)}$ as $i\rightarrow\infty$, we can
pick an integer $i(\epsilon)$ large enough such that for all $i\ge i(\epsilon)$ there hold
$n_i-2\geq n(\epsilon)$ and
\begin{align*}
\left|D_{\xi_{n(\epsilon)}^{\d_i}} \Theta(x_*, x_{n(\epsilon)}^{\d_i})
- D_{\xi_{n(\epsilon)}} \Theta(x_*, x_{n(\epsilon)})\right|<\frac{\epsilon}{2}.
\end{align*}
Therefore, it follows from Lemma \ref{lem stop} that
\begin{align*}
D_{\xi_{n_i-2}^{\d_i}} \Theta(x_*,x_{n_i-2}^{\d_i})
&\leq D_{\xi_{n(\epsilon)}^{\d_i}} \Theta(x_*, x_{n(\epsilon)}^{\d_i})
\leq D_{\xi_{n(\epsilon)}} \Theta(x_*, x_{n(\epsilon)})
+ \frac{\epsilon}{2}<\epsilon
\end{align*}
for all $i\geq i(\epsilon)$. Since $\epsilon>0$ is arbitrary, we thus obtain (\ref{eq:19june}).
With the help of \eqref{eq decrease2},
we then obtain
\begin{align}\label{eq:20.2june}
D_{\xi_{n_i-1}^{\d_i}} \Theta(x_*, x_{n_i-1}^{\d_i})\rightarrow 0 \qquad\textrm{as } i\rightarrow\infty.
\end{align}
In view of \eqref{eq decrease22} we have
\begin{align*}
\frac{1}{\alpha_{n_i-1}}\|F(x_{n_i-1}^{\d_i})-y^{\d_i}\|^r
\leq \frac{\tau}{(1-\eta)\tau-1-\eta} D_{\xi_{n_i-2}^{\d_i}} \Theta(x_*, x_{n_i-2}^{\d_i}).
\end{align*}
Since $\|F(x_{n_i-1}^{\d_i})-y^{\d_i}\|>\tau\d_i$, we can conclude from (\ref{eq:19june}) that
$\d_i^r/\alpha_{n_i-1}\rightarrow 0$.
Since $\alpha_{n_i-1}\leq c_0\alpha_{n_i}$, we must have $\delta_i^r/\alpha_{n_i}\rightarrow 0$
as $i\rightarrow \infty$. In view of \eqref{eq ndelta} and (\ref{eq:20.2june}), we can obtain
\begin{align}\label{eq:19.8june}
D_{\xi_{n_i}^{\d_i}} \Theta(x_*, x_{n_i}^{\d_i})\rightarrow 0
\qquad \mbox{ as } i\rightarrow\infty,
\end{align}
which together with the uniformly convexity of $\Theta$ implies that $x_{n_i}^{\d_i} \rightarrow x_*$
as $i\rightarrow \infty$.

Finally we show that $\Theta(x_{n_i}^{\d_i}) \rightarrow \Theta(x_*)$ as $i \rightarrow \infty$.
In view of (\ref{eq:19.8june}), it suffices to show that
\begin{align}\label{eq:19.11june}
\l \xi_{n_i}^{\d_i}, x_*-x_{n_i}^{\d_i}\r \rightarrow 0 \qquad \mbox{as } i \rightarrow \infty.
\end{align}
Recall that $\Theta(x_n)\rightarrow \Theta(x_*)$ and $\l \xi_n, x_*-x_n\r \rightarrow 0$ as
$n \rightarrow \infty$ which have been established in Theorem \ref{thm1} and its proof. Thus, for any $\epsilon>0$, we
can pick an integer $l_0$ such that
\begin{align}\label{eq:19.10june}
\left|\Theta(x_{l_0})-\Theta(x_*)\right| <\epsilon \quad \mbox{and} \quad
\left|\l \xi_{l_0}, x_*-x_{l_0}\r\right| <\epsilon.
\end{align}
Then, using (\ref{eq:19.9june}) in Lemma \ref{lem 19june}, we can derive
\begin{align*}
\left|\l \xi_{n_i}^{\d_i}, x_*-x_{n_i}^{\d_i}\r\right|
& \le \left|\l \xi_{l_0}^{\d_i}, x_*-x_{n_i}^{\d_i}\r \right|
+\left|\l \xi_{n_i}^{\d_i} -\xi_{l_0}^{\d_i}, x_*-x_{n_i}^{\d_i}\r\right| \\
& \le  \left|\l \xi_{l_0}^{\d_i}, x_*-x_{n_i}^{\d_i}\r \right|
+ C_2 \frac{\d_i^r}{\a_{n_i}} + C_3 D_{\xi_{l_0}^{\d_i}} \Theta (x_*, x_{l_0}^{\d_i}).
\end{align*}
By using the definition of Bregman distance and (\ref{eq:19.10june}) we have
\begin{align*}
D_{\xi_{l_0}^{\d_i}} \Theta(x_*, x_{l_0}^{\d_i})
&= \left[ \Theta(x_*)-\Theta(x_{l_0}) \right] + \left[ \Theta(x_{l_0}) -\Theta(x_{l_0}^{\d_i})\right]
 - \l \xi_{l_0}, x_*-x_{l_0}\r \\
& \quad \, -\l \xi_{l_0}, x_{l_0} -x_{l_0}^{\d_i}\r
 - \l \xi_{l_0}^{\d_i}-\xi_{l_0}, x_*-x_{l_0}^{\d_i}\r\\
&\le 2\epsilon + \left| \Theta(x_{l_0}) -\Theta(x_{l_0}^{\d_i})\right| +
\left|\l \xi_{l_0}, x_{l_0} -x_{l_0}^{\d_i}\r\right| + \left| \l \xi_{l_0}^{\d_i}-\xi_{l_0}, x_*-x_{l_0}^{\d_i}\r \right|.
\end{align*}
Therefore
\begin{align*}
\left|\l \xi_{n_i}^{\d_i}, x_*-x_{n_i}^{\d_i}\r\right|
& \le 2C_3 \epsilon + C_2 \frac{\d_i^r}{\a_{n_i}} +  \left|\l \xi_{l_0}^{\d_i}, x_*-x_{n_i}^{\d_i}\r \right|
+ C_3  \left| \Theta(x_{l_0}) -\Theta(x_{l_0}^{\d_i})\right| \\
& \quad \, + C_3 \left|\l \xi_{l_0}, x_{l_0} -x_{l_0}^{\d_i}\r\right|
+ C_3 \left| \l \xi_{l_0}^{\d_i}-\xi_{l_0}, x_*-x_{l_0}^{\d_i}\r \right|.
\end{align*}
In view of Lemma \ref{lem xdelta} and the facts that $\d_i^r/\a_{n_i}\rightarrow 0$ and $x_{n_i}^{\d_i} \rightarrow x_*$
as $i\rightarrow \infty$ which we have established in the above, we can conclude that there is an integer
$i_0(\epsilon)$ such that for all $i>i_0(\epsilon)$ there hold $n_i>l_0$ and
$\left|\l \xi_{n_i}^{\d_i}, x_*-x_{n_i}^{\d_i}\r\right| \le 3 C_3 \epsilon$.
Since $\epsilon>0$ is arbitrary, we thus obtain (\ref{eq:19.11june}).

\subsection{\bf A variant of the discrepancy principle}

When $n_\d$ denotes the integer determined by the discrepancy principle (\ref{DP}), from Lemma \ref{lem stop}
we can see that the Bregman distance $D_{\xi_n^\d}\Theta(x^\dag, x_n^\d)$ is decreasing up to $n=n_\d-1$.
This monotonicity, however, may not hold at $n=n_\d$. Therefore, it seems reasonable to consider
the following variant of the discrepancy principle.

\begin{Rule}\label{rule4.1}
Let $\tau>1$ be a given number. If $\|F(x_0)-y^\d\| \le \tau \d$, we define $n_\d:=0$; otherwise we define
\begin{align*}
n_\d:= \max \left\{ n: \|F(x_n^\d)-y^\d\|\ge \tau \d \right\},
\end{align*}
i.e., $n_\d$ is the integer such that
\begin{align*}
\|F(x_{n_\d+1}^\d)-y^\d\|<\tau\d\leq \|F(x_n^\d)-y^\d\|,
\quad 0\leq n\leq n_\d.
\end{align*}
\end{Rule}

We point out that the argument for proving Theorem \ref{th2} can be used to prove the convergence property of $x_{n_\d}^\d$ for
$n_\d$ determined by Rule \ref{rule4.1}, we can even drop the condition $\alpha_n\leq c_0\alpha_{n+1}$
on $\{\alpha_n\}$ in Theorem \ref{th2}. In fact we have the following result.

\begin{theorem}\label{th4}
Let $\X$ be reflexive and $\Y$ be uniformly smooth, $\Theta$ satisfy Assumption \ref{A0}, and $F$ satisfy
Assumptions \ref{A1} and \ref{A2}. Let $1<r<\infty$ and $\tau>(1+\eta)/(1-\eta)$, and
let $\{\alpha_n\}$ be such that $\sum_{n=1}^{\infty}\alpha_n^{-1} = \infty$. Assume further that
\begin{equation*}
D_{\xi_0} \Theta(x^\dag, x_0) \le  \varphi (\rho).
\end{equation*}
Then, the integer $n_\d$ defined by Rule \ref{rule4.1} is finite. Moreover, there is a solution $x_*\in D(\Theta)$ of (\ref{1.1}) such that
\begin{align} \label{eq:convergence222}
x_{n_\d}^\d\rightarrow x_*, \quad  \Theta(x_{n_\d}^\d) \rightarrow \Theta(x_*) \quad
\mbox{and} \quad D_{\xi_{n_{\delta}}^{\delta}} \Theta(x_*, x_{n_{\delta}}^{\delta}) \rightarrow 0
\end{align}
as $\d\rightarrow 0$. If, in addition, $\N(F'(x^\dag))\subset \N (F'(x))$ for all $x\in B_{3\rho}(x_0)\cap D(F)$, then
$x_*=x^\dag$.
\end{theorem}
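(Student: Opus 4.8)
The plan is to mirror the proof of Theorem \ref{th2} while exploiting the single structural advantage of Rule \ref{rule4.1}: by construction $\|F(x_n^\d)-y^\d\|\ge \tau\d$ for \emph{every} $0\le n\le n_\d$, not merely for $n<n_\d$. This means the interior decrease estimate (\ref{20.6june}) of Lemma \ref{lem stop}, which needed $\|F(x_m^\d)-y^\d\|>\tau\d$, now also applies at the final index $m=n_\d$. First I would re-run the induction of Lemma \ref{lem stop} to show $x_n^\d\in B_{2\rho}(x_0)$ and $D_{\xi_n^\d}\Theta(x^\dag,x_n^\d)\le D_{\xi_0}\Theta(x^\dag,x_0)$ now for all $0\le n\le n_\d$; because the residual stays above $\tau\d$ up to and including $n_\d$, the step that places $x_m^\d$ in $B_{3\rho}(x_0)$ and yields (\ref{20.6june}) goes through at every index using only the weaker closeness bound $D_{\xi_0}\Theta(x^\dag,x_0)\le\varphi(\rho)$. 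Thus the monotonicity (\ref{eq decrease2}) and the summable estimate (\ref{eq decrease22}) hold on the whole range $1\le n\le n_\d$, and the boundary inequality (\ref{eq ndelta}) with its correction term $\d^r/\a_{n_\d}$ is never invoked; this is precisely what removes the requirement $\a_n\le c_0\a_{n+1}$. Finiteness of $n_\d$ then follows verbatim by summing (\ref{eq decrease22}), using the monotonicity (\ref{mono}) together with $\sum_n\a_n^{-1}=\infty$, since $\|F(x_n^\d)-y^\d\|\ge\tau\d>0$ throughout. I would also record the analogue of Lemma \ref{lem 19june}, whose proof is unchanged and in fact slightly cleaner, the relevant sum now running over indices all satisfying the residual lower bound.

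Next I would carry out the convergence analysis as $\d\to 0$ along an arbitrary sequence $\d_i\to 0$, taking $x_*$ to be the strong limit of the noise-free sequence $\{x_n\}$ supplied by Theorem \ref{thm1}, and splitting according to whether $\{n_{\d_i}\}$ is unbounded or bounded; the stability Lemma \ref{lem xdelta} is available unchanged, being independent of the stopping rule. In the unbounded case $n_i:=n_{\d_i}\to\infty$ the improvement pays off at once: given $\epsilon>0$, choose $n(\epsilon)$ with $D_{\xi_{n(\epsilon)}}\Theta(x_*,x_{n(\epsilon)})<\epsilon/2$ from Theorem \ref{thm1}, use Lemma \ref{lem xdelta} to get $D_{\xi_{n(\epsilon)}^{\d_i}}\Theta(x_*,x_{n(\epsilon)}^{\d_i})\to D_{\xi_{n(\epsilon)}}\Theta(x_*,x_{n(\epsilon)})$, and for large $i$ with $n_i\ge n(\epsilon)$ invoke the monotonicity (\ref{eq decrease2}), now valid up to $n_i$, to obtain $D_{\xi_{n_i}^{\d_i}}\Theta(x_*,x_{n_i}^{\d_i})\le D_{\xi_{n(\epsilon)}^{\d_i}}\Theta(x_*,x_{n(\epsilon)}^{\d_i})<\epsilon$. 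Hence $D_{\xi_{n_i}^{\d_i}}\Theta(x_*,x_{n_i}^{\d_i})\to 0$ directly, bypassing the $n_i-2,n_i-1,n_i$ ladder and the $c_0$-dependent step of Theorem \ref{th2}; Assumption \ref{A0} then gives $x_{n_i}^{\d_i}\to x_*$, and the convergence $\Theta(x_{n_i}^{\d_i})\to\Theta(x_*)$ follows as in Theorem \ref{th2} by using the analogue of Lemma \ref{lem 19june} with $\d_i^r/\a_{n_i}\to 0$ and the noise-free facts $\Theta(x_n)\to\Theta(x_*)$, $\l\xi_n,x_*-x_n\r\to 0$ to force $\l\xi_{n_i}^{\d_i},x_*-x_{n_i}^{\d_i}\r\to 0$.

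In the bounded case I pass to a subsequence with $n_{\d_i}=n_0$ fixed and use the defining property of Rule \ref{rule4.1} at the index $n_0+1$, namely $\|F(x_{n_0+1}^{\d_i})-y^{\d_i}\|<\tau\d_i$; since Lemma \ref{lem xdelta} gives $x_{n_0+1}^{\d_i}\to x_{n_0+1}$ and $F$ is continuous, letting $i\to\infty$ yields $F(x_{n_0+1})=y$. Thus the noise-free iteration attains a solution by step $n_0+1$, so $x_n=x_{n_0+1}$ for all $n\ge n_0+1$ and $x_*=x_{n_0+1}$; once one knows the output limit $x_{n_0}=\lim_i x_{n_0}^{\d_i}$ equals $x_*$, all three convergences in (\ref{eq:convergence222}) are immediate from stability, since then $D_{\xi_{n_0}^{\d_i}}\Theta(x_*,x_{n_0}^{\d_i})\to D_{\xi_{n_0}}\Theta(x_*,x_{n_0})=0$ and $\Theta(x_{n_0}^{\d_i})\to\Theta(x_{n_0})=\Theta(x_*)$. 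The step I expect to be the main obstacle is exactly this identification: ruling out the degenerate possibility that the stopping index freezes one step before the noise-free iteration reaches an exact solution, i.e. showing that a bounded $n_{\d_i}$ forces the noise-free residual to vanish already at step $n_0$, so that $F(x_{n_0})=y$ and $x_{n_0}=x_*$. Finally, the identification $x_*=x^\dag$ under $\N(F'(x^\dag))\subset\N(F'(x))$ is inherited directly from Theorem \ref{thm1}, as $x_*$ is the noise-free limit. Everything outside the bounded-index reconciliation is a faithful, and in places simpler, transcription of the arguments already developed for Theorem \ref{th2}.
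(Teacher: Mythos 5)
Your outline reproduces the paper's proof almost exactly in its favorable parts: under Rule \ref{rule4.1} the estimates (\ref{eq decrease2}) and (\ref{eq decrease22}) hold up to and including $n=n_\d$, the boundary correction (\ref{eq ndelta}) and with it the condition $\a_n\le c_0\a_{n+1}$ are never needed, finiteness of $n_\d$ follows by the same summation, and in the unbounded-index case the monotonicity up to $n_i$ lets you bypass the $n_i-2,\,n_i-1,\,n_i$ ladder of Theorem \ref{th2}. All of that is correct and is what the paper does. However, in the bounded-index case you stop precisely at the decisive point: you flag the identification $x_{n_0}=x_*$ as ``the main obstacle'' and give no argument for it, so the proposal as written does not prove the theorem. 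This is a genuine gap, and it is the one place where Theorem \ref{th4} requires an idea not already contained in Theorem \ref{th2}: under the discrepancy principle (\ref{DP}) one has $\|F(x_{n_0}^{\d_i})-y^{\d_i}\|\le\tau\d_i$ at the stopping index itself, so $F(x_{n_0})=y$ follows at once from stability; under Rule \ref{rule4.1} the inequality at $n_0$ points the wrong way, and stability only yields $F(x_{n_0+1})=y$, leaving open the ``freezing'' scenario you describe.

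The paper closes this gap with a short, purely noise-free observation about the iteration (\ref{eq:method}). Since $F(x_{n_0+1})=y$, the dual update gives $J_r(F(x_{n_0+1})-y)=J_r(0)=0$, hence $\xi_{n_0+1}=\xi_{n_0}$. Then the symmetric Bregman identity
\begin{align*}
D_{\xi_{n_0}}\Theta(x_{n_0+1},x_{n_0})+D_{\xi_{n_0+1}}\Theta(x_{n_0},x_{n_0+1})
=\l \xi_{n_0+1}-\xi_{n_0},\, x_{n_0+1}-x_{n_0}\r =0,
\end{align*}
together with the nonnegativity of both terms, forces $D_{\xi_{n_0}}\Theta(x_{n_0+1},x_{n_0})=0$, and the strict (indeed uniform) convexity of $\Theta$ gives $x_{n_0+1}=x_{n_0}$. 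Hence $F(x_{n_0})=y$ and $x_{n_0}=x_{n_0+1}=x_*$, after which, as you correctly say, all three limits in (\ref{eq:convergence222}) follow from Lemma \ref{lem xdelta}. With this insertion your proof is complete and coincides with the paper's. Two minor remarks on the rest: the proof of the Lemma \ref{lem 19june} analogue is not literally ``unchanged'' --- the term at $n=n_\d$ must now be handled via the lower bound $\|F(x_{n_\d}^\d)-y^\d\|\ge\tau\d$ and (\ref{eq decrease22}) at $n_\d$ rather than via the upper bound $\le\tau\d$ --- but this only makes the estimate cleaner, eliminating the $\d^r/\a_{n_\d}$ term altogether; consequently your step establishing $\d_i^r/\a_{n_i}\to 0$ (which is indeed derivable without $c_0$ from (\ref{eq decrease22}) at $n=n_i$) is correct but unnecessary.
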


\begin{proof}
The proof of Lemma \ref{lem stop} can be used without change to show that $n_\d<\infty$ and that
(\ref{eq decrease2}) and (\ref{eq decrease22}) hold for $1\le n\le n_\d$. Consequently, (\ref{eq:19.9june})
in Lemma \ref{lem 19june} becomes
\begin{align}\label{eq:22.1june}
\left|\l \xi_{n_\d}^\d-\xi_l^\d, x_*-x_{n_\d}^\d\r \right|
\le \frac{3(1+\eta)(1+\tau)}{(1-\eta)\tau-1-\eta} D_{\xi_l^\d} \Theta (x_*, x_l^\d), \quad 0\le l<n_\d.
\end{align}

In order to prove the convergence result (\ref{eq:convergence222}), as in the proof of Theorem \ref{th2}
we consider two cases.

Assume first that $\{y^{\d_i}\}$ is a sequence satisfying $\|y^{\d_i}-y\|\leq \d_i$ with $\d_i\rightarrow 0$
such that $n_{\d_i}\rightarrow n_0$ as $i\rightarrow\infty$ for some integer $n_0$. We may assume $n_{\d_i} =n_0$
for all $i$. By Rule \ref{rule4.1} we always have $\|F(x_{n_0+1}^{\d_i})-y^{\d_i}\|\le \tau\d_i$. By letting $i\rightarrow\infty$, we obtain
$F(x_{n_0+1}) = y$. This together with the definition of $x_n$ implies that $x_n = x_{n_0+1}$ for all $n\geq n_0+1$.
It then follows from Theorem \ref{thm1} that $x_* = x_{n_0+1}$. We claim that
$x_{n_0+1} = x_{n_0}$. To see this, by using the definition of $\xi_{n_0+1}$, we have
\begin{align*}
\xi_{n_0+1} = \xi_{n_0} -\frac{1}{\alpha_{n_0+1}} F'(x_{n_0+1})^* J_r(F(x_{n_0+1})-y) = \xi_{n_0}.
\end{align*}
Therefore
\begin{align*}
D_{\xi_{n_0}} \Theta(x_{n_0+1},x_{n_0}) &\le D_{\xi_{n_0}} \Theta(x_{n_0+1},x_{n_0})
+D_{\xi_{n_0+1}} \Theta(x_{n_0},x_{n_0+1}) \\
&=\l \xi_{n_0+1} -\xi_{n_0}, x_{n_0+1}-x_{n_0}\r =0.
\end{align*}
This and the strictly convexity of $\Theta$ imply that $x_{n_0+1} = x_{n_0}$. Consequently $x_{n_0} = x_*$.
A simple application of Lemma \ref{lem xdelta} then gives the desired conclusion.

Assume next that $\{y^{\d_i}\}$ is a sequence satisfying $\|y^{\d_i}-y\|\leq\d_i$ with $\d_i\rightarrow 0$
such that $n_{\d_i}\rightarrow\infty$ as $i\rightarrow\infty$. We can follow the argument for deriving
(\ref{eq:19june}) to show that $D_{\xi_{n_i}^{\d_i}} \Theta(x_*, x_{n_i}^{\d_i})\rightarrow 0$ which in
turn implies that $x_{n_i}^{\d_i} \rightarrow x_*$ by the uniformly convexity of $\Theta$.
Then we can use (\ref{eq:22.1june}) and follow the same procedure in the proof of Theorem \ref{th2}
to obtain $\Theta(x_{n_i}^{\d_i})\rightarrow \Theta(x_*)$ as $i\rightarrow \infty$. \hfill $\Box$
\end{proof}

\section{Numerical examples}\label{se5}

In this section we present some numerical simulations to test the performance of our method by considering
a linear integral equation of the first kind and a nonlinear problem arising from the parameter
identification in partial differential equations.

\begin{example}\label{ex1}

We consider the linear integral equation of the form
\begin{align}\label{linear operator}
Ax(s) := \int_0^1 K(s,t) x(t)dt = y(s) \qquad\textrm{on}\ [0,1],
\end{align}
where
\begin{align*}
K(s,t) = \left\{ \begin{array}{lll}
    40s(1-t), & \quad s\leq t, \\
    40t(1-s), & \quad s\geq t.
  \end{array}
\right.
\end{align*}
It is clear that $A: \X:=L^2[0,1]\to \Y:=L^2[0,1]$ is a compact operator. Our goal is to find the solution
of \eqref{linear operator} by using some noisy data $y^{\delta}$ instead of $y$. We assume that the exact solution is
\begin{align*}
x^{\dag}(t) =
\left\{
    \begin{array}{ll}
         0.5, \qquad & t\in[0.292,0.300], \\
         1, &      t\in[0.500,0.508], \\
         0.7, & t\in [0.700, 0.708],\\
         0, & \mbox{elsewhere}
             \end{array}
           \right.
\end{align*}
Let $y = Ax^{\dag}$ which is the exact data. For a given noise level $\d>0$, we add random Gaussian noise to $y$
to obtain $y^\d$ satisfying $\|y-y^\d\|_{L^2[0,1]} = \d$
which is used to reconstruct $x^\dag$ when the iteration is terminated by the
discrepancy principle (\ref{DP}).

In our numerical simulations, we take $x_0=0$ and $\xi_0=0$, we divide $[0,1]$ into $N=400$ subintervals of equal length,
approximate any integrals by the trapezoidal rule, and solve the involved minimization problems by the modified Fletcher-Reeves
CG method in \cite{ZZL2006}. In Figure \ref{linear1} we present the reconstruction results by taking $\d = 0.5\times 10^{-3}$
and $\alpha_n = 2^{-n}$ with $\tau=1.02$ in the discrepancy principle (\ref{DP}).
Figure \ref{linear1}(a) reports the result via the method with $\Theta(x) = \|x\|_{L^2}^2$. It is
clear that the reconstructed solution is rather oscillatory and fails to capture the sparsity of the exact solution $x^{\dag}$.
Figure \ref{linear1}(b) gives the result of the method with $\Theta(x) = \mu\|x\|_{L^2}^2+\|x\|_{L^1}$ and $\mu = 0.01$.
During the computation, $\|x\|_{L^1}$ is replaced by a smooth one $\int_0^1\sqrt{|x|^2+\epsilon}$ with $\epsilon = 10^{-6}$.
The sparsity reconstruction is significantly improved.

\end{example}

\begin{figure}[htp]
  \begin{center}
    \includegraphics[width = 0.9\textwidth, height= 2.8in]{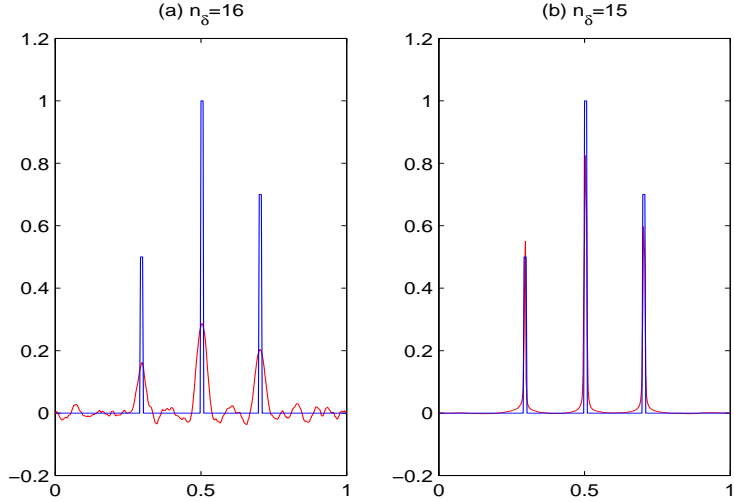}
  \end{center}
  \caption{\rm Reconstruction results for Example \ref{ex1}:
  $(a)$ $\Theta(x) = \|x\|_{L^2}^2$; $(b)$ $\Theta(x) = \mu \|x\|_{L^2}^2+\|x\|_{L^1}$
  with $\mu=0.01$.}
  \label{linear1}
\end{figure}

\begin{example}\label{ex4}
We next consider the identification of the parameter $c$ in the boundary value problem
\begin{align}\label{PDE}
\left\{
  \begin{array}{ll}
    -\triangle u + cu = f \qquad  \mbox{in } \Omega, \\
    u = g  \qquad \mbox{on } \partial\Omega
  \end{array}
\right.
\end{align}
from an $L^2(\Omega)$-measurement of the state $u$, where $\Omega\subset\mathbb{R}^d$, $d\leq 3$,
is a bounded domain with Lipschitz boundary, $f\in L^2(\Omega)$ and $g\in H^{3/2}(\p\Omega)$.
We assume that the sought solution $c^{\dag}$ is in $L^2(\Omega)$. This problem reduces to solving
an equation of the form (\ref{1.1}) if we define the nonlinear operator $F: L^2(\Omega)\rightarrow  L^2(\Omega)$ by
$F(c): = u(c)$, where $u(c)\in H^2(\Omega)\subset  L^2(\Omega)$ denotes the unique solution of (\ref{PDE}).
This operator $F$ is well defined on
\begin{align*}
D(F): = \left\{ c\in  L^2(\Omega) : \|c-\hat{c}\|_{ L^2(\Omega)}\leq \gamma_0 \mbox{ for some }
\hat{c}\ge 0 \mbox{ a.e.}\right\}
\end{align*}
for some positive constant $\gamma_0>0$. It is well known that $F$ is Fr\'{e}chet differentiable;
the Fr\'{e}chet derivative of $F$ and its adjoint are given by
\begin{align*}
F'(c)h  = -A(c)^{-1}(hF(c)) \quad \mbox{and} \quad F'(c)^* w  = -u(c) A(c)^{-1}w
\end{align*}
for $h, w\in L^2(\Omega)$, where $A(c): H^2\cap H_0^1\rightarrow  L^2$ is defined by $A(c) u = -\triangle u +cu$
which is an isomorphism uniformly in a ball $B_{\rho}(c_0) \cap D(F)$ for any $c_0\in D(F)$ with small $\rho>0$.
It has been shown (see \cite{EHN96}) that for any $\bar c, c\in B_{\rho}(c_0)$ there holds
\begin{align*}
\|F(\bar c)-F(c)- F'(c)(\bar c-c)\|_{L^2(\Omega)}\leq C\|\bar c-c\|_{L^2(\Omega)}\|F(\bar c)-F(c)\|_{L^2(\Omega)}.
\end{align*}
Therefore, Assumption \ref{A1} and the condition (\ref{A1-s}) hold if $\rho>0$ is small enough.

{\rm
In our numerical simulation, we consider the two dimensional problem with $\Omega = [0,1]\times[0,1]$ and
\begin{align*}
c^{\dag}(x,y) = \left\{
  \begin{array}{ll}
    1, \qquad & \hbox{if $(x-0.3)^2+(y-0.7)^2\leq 0.2^2$;} \\
    0.5, & \hbox{if $(x,y)\in [0.6,0.8]\times [0.2,0.5]$;} \\
    0, & \hbox{elsewhere.}
  \end{array}
\right.
\end{align*}
We assume $u(c^{\dag}) = x+y$ and add noise to produce the noisy data $u^{\delta}$ satisfying
$\|u^{\delta} - u(c^{\dag})\|_{L^2(\Omega)} = \delta$. We take $\delta = 0.1\times 10^{-3}$ and $\alpha_n = 2^{-n}$.
The partial differential equations involved are solved approximately by a finite difference method by dividing $\Omega$
into $40\times 40$ small squares of equal size. and the involved minimization problems are solved by the
modified nonlinear CG method in \cite{ZZL2006}. we take the initial guess $c_0=0$ and $\xi_0=0$, and terminate
the iteration by the discrepancy principle \eqref{DP} with $\tau = 1.05$.

\begin{figure}[htp]
  \begin{center}
  \includegraphics[width = 1\textwidth, height= 4.4in]{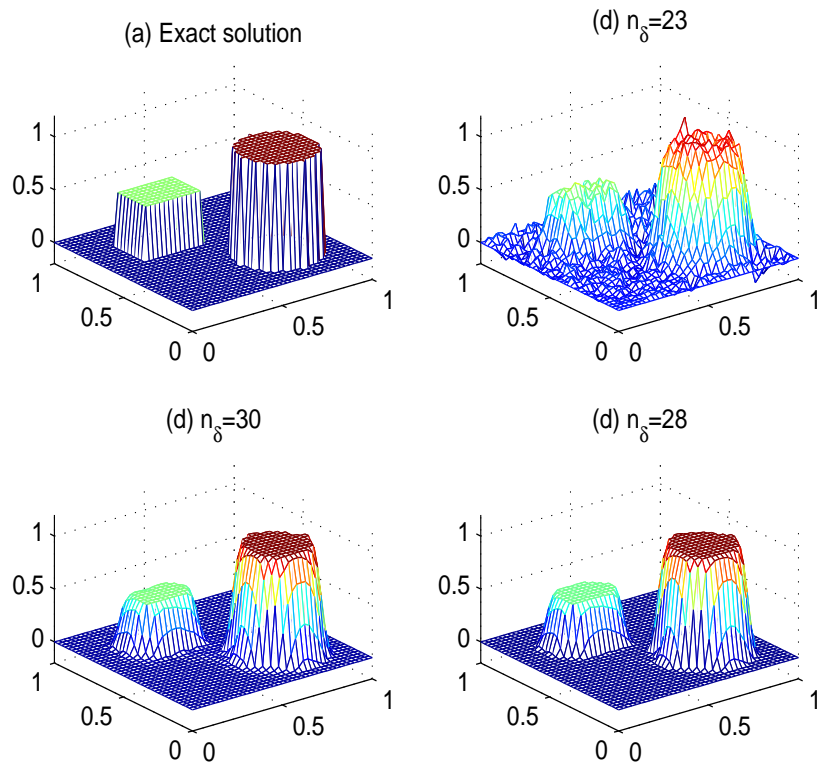}
  \end{center}
  \caption{\rm Reconstruction results for Example~\ref{ex4}: (a) Exact solution; (b) $\Theta(c) = \|c\|_{L^2}^2$;
  (c) and (d) $\Theta(c) = \mu\|c\|_{L^2}^2+\int_\Omega|Dc|$ with $\mu=0.01$ and $\mu=1$ respectively.}
  \label{nonlinear2}
\end{figure}

Figure \ref{nonlinear2}(a) plots the exact solution $c^{\dag}(x,y)$.  Figure \ref{nonlinear2}(b)
shows the result for the method with $\Theta(c) = \|c\|^2_{L^2}$.  Figure \ref{nonlinear2} (c) and (d)
report the reconstruction results for the method with $\Theta(c) = \mu\|c\|_{L^2}^2+\int_\Omega |Dc|$
for $\mu = 0.01$ and $\mu=1.0$ respectively; the term $\int_\Omega |Dc|$ is
replaced by a smooth one $\int_{\Omega}\sqrt{|Dc|^2+\epsilon}$ with $\epsilon = 10^{-6}$ during computation.
The reconstruction results in (c) and (d) significantly improve the one in (b) by efficiently removing the
notorious oscillatory effect and indicate that the method is robust with respect to $\mu$. We remark that, due to
the smaller value of $\mu$, the reconstruction result in (d) is slightly better than the one in (c) as can be seen
from the plots; the computational time for (d), however, is longer.
}
\end{example}

\vskip 0.3cm
\noindent
{\bf Acknowledgements}  Q Jin is partly supported by the grant DE120101707 of Australian Research Council,
and M Zhong is partly supported by the National Natural Science Foundation of China (No.11101093).

\end{document}